\documentclass[10pt,a4paper]{article}

\usepackage{cite}
\usepackage{graphicx}
\usepackage{enumerate}
\usepackage{booktabs}
\usepackage{enumerate}
\usepackage{color}
\usepackage{multirow, array} 
\usepackage{float}
\usepackage{amsmath}
\usepackage{amssymb}
\usepackage{amsthm}
\usepackage{amsfonts}
\usepackage{hyperref}

\newcommand{\Cm}{{\ensuremath{\mathbb{C}^{m\times n}}}}
\newcommand{\Cn}{{\ensuremath{\mathbb{C}^{n\times m}}}}
\newcommand{\Cnn}{{\ensuremath{\mathbb{C}^{n\times n}}}}

\newcommand{\Cmm}{{\ensuremath{\mathbb{C}^{m\times m}}}}

\newcommand{\Ra}{{\ensuremath{\cal R}}}
\newcommand{\Nu}{{\ensuremath{\cal N}}}
\newcommand{\rk}{{\ensuremath{\rm rank}}}

\newcommand{\core}{\mathrel{\text{\textcircled{$\#$}}}}
\newcommand{\odagger}{\mathrel{\text{\textcircled{$\dagger$}}}}

\newcommand{\ind}{{\ensuremath{\text{\rm Ind}}}}

\newcommand{\weak}{\mathrel{\textsc{\textcircled{w}}}}

\newtheorem{theorem}{Theorem}[section]
\newtheorem{corollary}[theorem]{Corollary}
\newtheorem{lemma}[theorem]{Lemma}

\newtheorem{remark}[theorem]{Remark}
\newtheorem{example}[theorem]{Example}
\newtheorem{definition}[theorem]{Definition}

\begin{document}
\author{D.E. Ferreyra\thanks{Universidad Nacional de R\'io Cuarto, CONICET, FCEFQyN, RN 36 Km 601, R\'io Cuarto, 5800, C\'ordoba. Argentina. E-mail: \texttt{deferreyra@exa.unrc.edu.ar, flevis@exa.unrc.edu.ar, pmoas@exa.unrc.edu.ar}.}\;, F.E. Levis$^{*}$, R.P. Moas$^{*}$, H.H. Zhu\thanks{School of Mathematics, Hefei University of Technology, Hefei 230009, China. E-mail: \texttt{hhzhu@hfut.edu.cn}}
}


\title{A new generalized inverse for rectangular matrices. A general approach}
\date{}
\maketitle

\begin{abstract}
Rao and Mitra  in 1972 introduced two different types of constraints to extend the concept of Bott-Duffin inverse  and defined a new constrained inverse. Mary in 2011 defined the inverse along an element that generalizes the Moore-Penrose and Drazin inverses in a semigroup. Drazin in 2012 introduced the $(b,c)$-inverse generalizing the Mary inverse. In 2017, Raki\'c  noted that the Rao-Mitra inverse is a direct precursor of the $(b,c)$-inverse. In this paper, we introduce the notion of $EF$-inverse  as a unified approach to the aforementioned generalized inverses. Moreover, we  show that the recently introduced generalized bilateral inverses that in turn contain the OMP, MPO, and MPOMP inverses can also be considered as special cases of the $EF$-inverse.   
\end{abstract}

\noindent{\bf AMS Classification:} 15A09, 15A24

\noindent{\bf  Keywords:} Moore-Penrose inverse,  Drazin inverse,  Mary inverse,  $(b,c)$-inverse,  Generalized bilateral inverses

\maketitle

\section{Introduction}\label{sec1}

Let ${\mathbb{C}}^{m\times n}$ be the set of $m\times n$ complex matrices and $A\in {{\mathbb{C}}^{m\times n}}$. By $A^{\ast }$, $\rk(A)$, ${\mathcal{N}}(A)$, and ${\mathcal{R}}(A)$ we denote the conjugate transpose, the rank, the null space, and the column space of $A$, respectively. Here,  $P_{\mathcal{M},\mathcal{N}}$  stands for the projector (idempotent) on the subspace $\mathcal{M}$ along the subspace $\mathcal{N}$.
When $\mathcal{N}$ is the orthogonal complement of $\mathcal{M}$, that is,  $\mathcal{N}=\mathcal{M}^{\perp }$, we write $P_{\mathcal{M}}$.

We  recall that the  Moore-Penrose inverse of $A$ is the unique matrix
$X\in {{\mathbb{C}}^{n\times m}}$ such that
\begin{equation*}
AXA=A, \quad XAX=X,\quad (AX)^*=AX,\quad \text{and}\quad
(XA)^*=XA,
\end{equation*} and is denoted by $A^{\dagger }$. The Moore-Penrose inverse can be used to represent the orthogonal projectors $P_{A}:=AA^{\dag }$ and $Q_A:=A^{\dag}A$ onto ${\mathcal{R}}(A)$ and  ${\mathcal{R}}(A^*)$ respectively.

A matrix $X$ verifying the condition $AXA=A$ (or $XAX=X$) is called an inner (or outer) inverse of $A$  and it is denoted by $A^{(1)}$ (or $A^{(2)}$).  The symbol $A\{1\}$ (or $A\{2\}$)  denotes the set of all inner (outer) inverses of $A$. 

The index of $A\in \Cnn$, denoted by $\ind(A)$, is the smallest nonnegative integer $k$ such that $\rk(A^k) = \rk(A^{k+1})$. 

Throughout this paper, we will assume that $\ind(A)=k\ge 1$.

The Drazin inverse of a matrix $A\in {{\mathbb{C}}^{n\times n}}$ of index $k$, is the unique matrix $X\in {{\mathbb{C}}^{n\times n}}$ satisfying  \begin{equation*}
XAX=X,\quad AX=XA,\quad \text{and}\quad A^{k+1}X=A^{k},
\end{equation*}
and is denoted by $A^{d}$. When $k=1$, the Drazin inverse of $A$ is called the group inverse of $A$ and is denoted by $A^{\#}$.

For $A\in \Cm$ of rank $r$, a subspace $\mathcal{T}$ of $\mathbb{C}^n$ of dimension $s\le r$, and a subspace $\mathcal{S}$ of $\mathbb{C}^m$ of dimension $m-s$, the unique matrix $X\in \Cn$ that satisfies 
\[X=XAX, \quad \Ra(X)=\mathcal{T}, \quad \text{and} \quad \Nu(X)=\mathcal{S},\]
is called an outer inverse of $A$ with prescribed range $\mathcal{T}$ and null space $\mathcal{S}$ and is denoted by $A^{(2)}_{\mathcal{T},\mathcal{S}}$. It is well known that $A^{(2)}_{\mathcal{T},\mathcal{S}}$  exists if and only if $A\mathcal{T}\oplus \mathcal{S}=\mathbb{C}^m$.

Recall that the Moore-Penrose inverse, the Drazin inverse, and the group inverse are outer inverses of $A$ with prescribed range and null space satisfying, respectively, the following representations
\begin{equation*}\label{2 inversas tradicionales}
A^\dagger=A^{(2)}_{\Ra(A^*),\,\Nu(A^*)}, \quad A^d=A^{(2)}_{\Ra(A^k),\,\Nu(A^k)}, \quad \text{and} \quad  A^\#=A^{(2)}_{\Ra(A),\,\Nu(A)}.
\end{equation*}

Let $A,B,C\in {{\mathbb{C}}^{n\times n}}$. In 1972, Rao and Mitra \cite{RaMi} introduced two new types of constraints in order to extend the Bott-Duffin inverse \cite{BeGr}:

\begin{center}
\begin{tabular}{ |p{5.4 cm}|p{5.4 cm}|}
\hline
\multicolumn{2}{|c|}{\bf Rao-Mitra constraints} \\
\hline
\textbf{Type I} & \textbf{Type II} \\
\hline
$c:~ \Ra(X)\subseteq \Ra(B)$ & $C:~ XA$ is an identity on $\Ra(B)$ \\
$r:~ \Ra(X^*)\subseteq \Ra(C^*)$ & $R:~ (AX)^*$  is an identity on $\Ra(C^*)$ \\
\hline
\end{tabular}
\end{center} 

\begin{definition}
\label{crCR} \cite{RaMi}  Let $A,B,C\in {{\mathbb{C}}^{n\times n}}$. The $crCR$-inverse of $A$ is a matrix $X\in {{\mathbb{C}}^{n\times n}}$ that satisfies constraints $c$, $r$, $C$ and $R$, and is denoted by $A_{B,C}$.
\end{definition}

In \cite{RaMi} it was proved that $A_{B,C}$ exists if and only if $\rk(CAB)=\rk(C)=\rk(B)$, in which case $A_{B,C}$ is unique. Moreover, the $crCR$-inverse can be computed by using inner inverses: 
 \begin{equation*} \label{obs1} A_{B,C}=B(CAB)^{(1)}C, \quad \text{where} \quad  (CAB)^{(1)}\in CAB\{1\}.
 \end{equation*}

In 2011, Mary \cite{Mary} defined the inverse along an element that contains the classical Moore-Penrose and Drazin inverses (and therefore the group inverse) as particular cases. In 2012, Drazin \cite{Dr2} introduced the $(b,c)$-inverse, which is  more general than the inverse along an element. Both types of generalized inverses were defined in the context of semigroups and abstract rings. Their definitions in the matrix context can be rewritten as follows.

\begin{definition}  \cite{Ra}
\label{allde} Let $A,D \in {{\mathbb{C}}^{n\times n}}$. The \emph{inverse of $A$ along $D$}  is a matrix $X\in {{\mathbb{C}}^{n\times n}}$ such that
$$XAD=D=DAX, \quad {\mathcal{R}}(X)\subseteq {\mathcal{R}}(D), \quad \text{and} \quad {\mathcal{N}}(D)\subseteq {\mathcal{N}}(X).$$  If such a matrix $X$ exists and is unique, it is denoted by $A^{\Vert D}$.
\end{definition}

It follows that the classical generalized inverses are a special case of the Mary inverse: 
\begin{equation*}
A^{\dag }=A^{\Vert A^{\ast }},\quad A^{d}=A^{\Vert A^{k}},\quad \text{and}\quad A^{\#}=A^{\Vert A}.
\end{equation*}

\begin{definition}\cite{Dr2,Ra}
\label{allde2} Let $A,B,C\in {{\mathbb{C}}^{n\times n}}$. The $(B,C)$-inverse of $A$  is a matrix $X\in {{\mathbb{C}}^{n\times n}}$ such that
\begin{equation} \label{BC inverse}
XAB=B,\quad CAX=C,\quad {\mathcal{R}}(X)\subseteq {\mathcal{R}}(B),\quad \text{and}\quad {\mathcal{R}}(X^{\ast })\subseteq {\mathcal{R}}(C^{\ast }).
\end{equation}
If such a matrix $X$ exists and is unique, it is denoted by $A^{\Vert (B,C)}$.
\end{definition}

As proved in \cite{Dr2}, the inverse along an element is a particular case of the $(B,C)$-inverse, that is, $$A^{\Vert (D,D)}=A^{\Vert D}.$$

Raki\'{c} \cite{Ra} noted that the $crCR$-inverse is a precursor of the $(b,c)$-inverse in  a ring. Moreover, in the matrix case both generalized inverses coincides, that is,  $$A^{\Vert (B,C)}=A_{B,C}.$$

In this paper we introduce a new type of generalized
inverse for a rectangular matrix  to be called $EF$-inverse
which generalizes the $(B,C)$-inverse and therefore, the classical Moore-Penrose, Drazin, and group inverses as well as all the generalized inverses recently appeared in the literature. This paper is organized as follows. In Section 2 we introduce the $EF$-inverse and we analyse its existence and uniqueness as solution of a matrix system.
Also, we give some of its main properties.  Section 3 is devoted to the study the canonical form of the $EF$-inverse. In section 4 we show that the more recent generalized inverses such as the generalized bilateral inverses which in turn include the OMP, MPO, and MPOMP inverses are particular cases of the $EF$-inverse.

\section{The $EF$-inverse. An extension of the Rao-Mitra inverse}

It is well known that the Moore-Penrose inverse $A^\dag$ of $A\in \Cm$ is the unique matrix $X\in \Cn$ satisfying 
\[XAX=X, \quad XA=P_{\Ra(A^*)}, \quad \text{and} \quad AX=P_{\Ra(A)}.\]
Similarly, the Drazin inverse $A^d$ of $A\in \Cnn$ is the unique $X\in \Cnn$ such that 
\[XAX=X \quad \text{and} \quad AX=XA=P_{\Ra(A^k),\Nu(A^k)}.\]
In particular, the group inverse of a matrix $A$ of index 1 is the unique matrix $X$ that satisfies $XAX$, $AX=XA=P_{\Ra(A),\Nu(A)}$.
 
Also, recall that the  generalized inverse with prescribed range and null space $A^{(2)}_{\mathcal{T},\mathcal{S}}$ of $A$ (if exists) is the unique matrix $X$ such that 
\begin{equation}\label{outer 2TS}
XAX=X, \quad XA=P_{\mathcal{T},(A^*(\mathcal{S}^{\perp}))^{\perp}}, \quad \text{and} \quad AX=P_{A(\mathcal{T}),\mathcal{S}}.
\end{equation}

In 2010, the core inverse for a square matrix was introduced  by Baksalary and Trenkler \cite{BaTr}.  
For a given matrix $A \in \Cnn$, the core inverse of $A$ is defined to be a matrix $X\in \Cnn$ satisfying the conditions 
\[AX=P_A \quad \text{and}\quad \Ra(X)\subseteq \Ra(A).\]
The authors  proved that $A$ is core invertible if and only if $\ind(A)= 1$. In this case, the core inverse (or GMP) of $A$ is the unique matrix  given by $A^{\core}=A^\# A A^\dag$. 
  
It is easy to see that the core inverse can be characterized by the following conditions
\[XAX=X, \quad XA=P_{\Ra(A),\Nu(A)}  \quad \text{and} \quad AX=P_{\Ra(A),\Nu(A^*)}. \]

In each of matrix system previous, we note that they possess a similar characterizing property in terms of suitable projectors (idempotent/orthogonal). That is, $X$ is a generalized inverse of  $A$ if satisfies the following equations 
\begin{equation}\label{system EF}
XAX=X, \quad  XA=E, \quad \text{and} \quad AX=F, 
\end{equation}
for certain projectors $E$ and $F$.  

Motivated by \eqref{system EF}  we introduce a new type of generalized inverse for rectangular matrices that extends the notion of $crCR$-inverse.

We begin with two lemmas due to Penrose.
\begin{lemma}
\cite{Pe, RaMi}  \label{lemma 1}  Let  $A \in \mathbb{C}^{m \times n}$, $B \in \mathbb{C}^{p \times q}$, $C \in \mathbb{C}^{m \times q}$, $A^{(1)}\in A\{1\}$, and $B^{(1)}\in B\{1\}$.   Then, the equation $AXB=C$ is consistent (in $X$) if and only if $AA^{(1)}CB^{(1)}B=C$, in which case the general solution is
\[X=A^{(1)}CB^{(1)}+Z-A^{(1)}AZBB^{(1)},\] where $Z$ is an arbitrary matrix.
\end{lemma}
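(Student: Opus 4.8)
The plan is to split the argument into two independent parts: the consistency criterion and the description of the solution set. Both parts reduce entirely to repeated use of the defining inner-inverse identities $AA^{(1)}A=A$ and $BB^{(1)}B=B$, so the whole proof is elementary algebra once the matrix sizes are tracked correctly.

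First I would establish the consistency equivalence. For necessity, I assume a solution exists, so $AXB=C$ for some $X$, and substitute $C=AXB$ into the candidate condition: $AA^{(1)}CB^{(1)}B=AA^{(1)}(AXB)B^{(1)}B=(AA^{(1)}A)X(BB^{(1)}B)=AXB=C$, which is exactly the claimed identity. For sufficiency, I assume $AA^{(1)}CB^{(1)}B=C$ and observe that the single matrix $X_0:=A^{(1)}CB^{(1)}$ already satisfies $AX_0B=AA^{(1)}CB^{(1)}B=C$, so the equation is consistent and $X_0$ is an explicit particular solution.

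Next, assuming consistency, I would verify that every matrix of the stated form solves the equation. Writing $X=A^{(1)}CB^{(1)}+Z-A^{(1)}AZBB^{(1)}$ and multiplying on the left by $A$ and on the right by $B$, the first term contributes $AA^{(1)}CB^{(1)}B=C$ (using the consistency relation just proved), while the last two terms cancel because $AA^{(1)}AZBB^{(1)}B=AZB$ by the inner-inverse identities. Hence $AXB=C$ for every choice of the free parameter $Z$.

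Finally, to confirm that the parametrization misses no solution, I would show that each solution is recovered for a suitable $Z$; the cleanest choice is $Z=X$ itself. Indeed, for any solution one has $A^{(1)}CB^{(1)}+X-A^{(1)}AXBB^{(1)}=A^{(1)}CB^{(1)}+X-A^{(1)}(AXB)B^{(1)}=A^{(1)}CB^{(1)}+X-A^{(1)}CB^{(1)}=X$, using $AXB=C$. This exhibits $X$ in the required form and completes the proof. There is no genuine obstacle here: the only points demanding care are keeping all products conformable ($X$ and $Z$ lie in $\mathbb{C}^{n\times p}$) and remembering that every cancellation relies solely on $A^{(1)}$ and $B^{(1)}$ being inner inverses, not on any true invertibility of $A$ or $B$.
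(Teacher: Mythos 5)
Your proof is correct and complete: the necessity/sufficiency argument for the consistency criterion, the verification that every matrix of the stated form solves $AXB=C$, and the recovery of an arbitrary solution via the choice $Z=X$ are exactly the classical Penrose argument. Note that the paper itself offers no proof of this lemma --- it is quoted with citations to Penrose and Rao--Mitra --- and your argument reproduces the standard proof from that source, so there is nothing to contrast.
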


\begin{lemma}
\cite{BeGr} \label{lemma 2} Let $A \in \mathbb{C}^{m \times n}$, $B \in \mathbb{C}^{p \times q}$, $D \in \mathbb{C}^{m \times p}$, and $E  \in \mathbb{C}^{n \times q}$. The matrix equations 
\begin{equation} \label{mqs}
AX=D \quad  \text{and}\quad XB=E,
\end{equation}
have a common solution if and only if each equation separately has a solution and  $AE=DB$. In particular, if $X_{0}  \in \mathbb{C}^{n \times p}$ is a solution of \eqref{mqs},  the general solution is $$X=X_{0}+(I_n-A^{(1)}A)Y(I_p-BB^{(1)}),$$ for arbitrary $A^{(1)}\in A\{1\}$, $B^{(1)}\in B\{1\}$, and $Y \in \mathbb{C}^{n \times p}$.
\end{lemma}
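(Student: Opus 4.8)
The plan is to prove the two implications separately and then characterize the full solution set. The forward implication is immediate: if $X$ is a common solution, then each of $AX=D$ and $XB=E$ is trivially solvable, and right-multiplying $AX=D$ by $B$ gives $AXB=DB$, while left-multiplying $XB=E$ by $A$ gives $AXB=AE$; comparing the two yields the compatibility condition $AE=DB$.

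For the converse, I would first invoke Lemma~\ref{lemma 1} (taking $B=I$, respectively $A=I$) to translate the separate solvability of $AX=D$ and $XB=E$ into the identities $AA^{(1)}D=D$ and $EB^{(1)}B=E$, valid for any fixed $A^{(1)}\in A\{1\}$ and $B^{(1)}\in B\{1\}$. I would then exhibit the explicit candidate
\[
X_{0}=A^{(1)}D+EB^{(1)}-A^{(1)}AEB^{(1)}
\]
and verify it directly. Computing $AX_{0}$ and using $AA^{(1)}A=A$ collapses the last two terms, leaving $AA^{(1)}D=D$. Computing $X_{0}B$ and using $EB^{(1)}B=E$ together with the hypothesis $AE=DB$ (to rewrite $A^{(1)}DB=A^{(1)}AE$) yields exactly $E$. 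This produces a common solution and settles existence.

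For the general solution, I would argue that every matrix of the form $X_{0}+(I_{n}-A^{(1)}A)Y(I_{p}-BB^{(1)})$ is a common solution, since $A(I_{n}-A^{(1)}A)=0$ and $(I_{p}-BB^{(1)})B=0$ annihilate the correction term under left multiplication by $A$ and right multiplication by $B$, respectively. Conversely, given any common solution $X$, I would set $W=X-X_{0}$, so that $AW=0$ and $WB=0$. The key observation is an idempotent absorption: $AW=0$ forces $A^{(1)}AW=0$ (apply $A^{(1)}$ to each column killed by $A$), hence $(I_{n}-A^{(1)}A)W=W$; symmetrically $WB=0$ forces $WBB^{(1)}=0$, hence $W(I_{p}-BB^{(1)})=W$. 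Choosing $Y=W$ then recovers $W=(I_{n}-A^{(1)}A)W(I_{p}-BB^{(1)})$, so that $X=X_{0}+(I_{n}-A^{(1)}A)Y(I_{p}-BB^{(1)})$, as claimed.

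The direct verifications are mechanical once the relations $AA^{(1)}A=A$ and $BB^{(1)}B=B$ are in hand, so the routine calculations are not where the difficulty lies. The main point requiring care — and the place where the hypothesis $AE=DB$ is indispensable — is checking that the \emph{single} matrix $X_{0}$ satisfies both equations at once; without the compatibility condition the two consistency requirements cannot be reconciled in one matrix. In the general-solution part, the only subtlety I expect is the absorption step $A^{(1)}AW=0$ and $WBB^{(1)}=0$, which is precisely what legitimizes the choice $Y=W$.
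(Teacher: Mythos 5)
Your proof is correct and complete. Note that the paper itself offers no proof of this lemma — it is quoted as a known result from Ben-Israel and Greville \cite{BeGr} — so there is nothing internal to compare against; your argument, via the explicit particular solution $X_{0}=A^{(1)}D+EB^{(1)}-A^{(1)}AEB^{(1)}$ (whose verification is exactly where the hypothesis $AE=DB$ enters) and the absorption identities $(I_n-A^{(1)}A)W=W$, $W(I_p-BB^{(1)})=W$ for $W$ with $AW=0$, $WB=0$, is precisely the classical proof found in that reference, going back to Penrose.
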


\begin{definition}
\label{EFinv}  Let $A\in \Cm$, $E\in \Cnn$, and $F\in \Cmm$.  An $EF$-inverse of $A$  is a matrix $X\in \Cn$ that verifies  \begin{equation}\label{system 1}
 XAX=X, \quad XA=E,\quad \text{and} \quad  AX=F.
\end{equation}
If such a matrix $X$ exists, it is denoted by $A^{(E,F)}$.
\end{definition}

We next establish the existence and the uniqueness of the introduced EF-inverse.

\begin{theorem} \label{existence uniqueness} Let $A\in \Cm$, $E\in \Cnn$, and $F\in \Cmm$.  The matrix equations
given in \eqref{system 1}
have a common solution if and only if 
\begin{equation}\label{condition 1}
EA^{(1)}A=E, \quad AA^{(1)}F=F, \quad  AE=FA, \quad E^2=E, \quad\text{and}\quad F^2=F,
\end{equation} 
 for some $ A^{(1)}\in A\{1\}$, in which case, the unique solution is given by  
\[
A^{(E,F)}=EA^{(1)}F.
\] 
\end{theorem}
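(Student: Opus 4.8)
The plan is to establish the two implications and uniqueness separately, using the explicit matrix $EA^{(1)}F$ as the candidate solution throughout. For the necessity direction I would assume that $X$ solves \eqref{system 1} and extract the five conditions one at a time. Idempotency is immediate from the outer equation: $E^2=(XA)(XA)=(XAX)A=XA=E$ and likewise $F^2=A(XAX)=AX=F$, while $AE=A(XA)=(AX)A=FA$ holds by associativity. For the two inner-inverse conditions I would fix any $A^{(1)}\in A\{1\}$ and exploit $AA^{(1)}A=A$, obtaining $EA^{(1)}A=(XA)A^{(1)}A=X(AA^{(1)}A)=XA=E$ and $AA^{(1)}F=AA^{(1)}(AX)=(AA^{(1)}A)X=AX=F$. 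Since these hold for \emph{every} inner inverse, the existential quantifier ``for some $A^{(1)}$'' in \eqref{condition 1} is satisfied a fortiori.

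For the converse, together with the representation, I would take the five identities in \eqref{condition 1} as given and verify directly that $X_0:=EA^{(1)}F$ solves \eqref{system 1}. The two one-sided equations are the crux: substituting $FA=AE$ and collapsing the inner-inverse block yields
\[
X_0A=EA^{(1)}(FA)=EA^{(1)}(AE)=(EA^{(1)}A)E=E^2=E,
\]
and dually $AX_0=(AE)A^{(1)}F=(FA)A^{(1)}F=F(AA^{(1)}F)=F^2=F$. The outer equation then drops out at once, since $X_0AX_0=(X_0A)X_0=EX_0=E(EA^{(1)}F)=E^2A^{(1)}F=X_0$. I would note that existence alone could instead be deduced from Lemma~\ref{lemma 2}, whose compatibility hypothesis $AE=FA$ is precisely one of our conditions while Lemma~\ref{lemma 1} guarantees that each one-sided equation is consistent; but the explicit check above is preferable because it simultaneously establishes the claimed formula.

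Uniqueness I would settle by a short telescoping argument: if $X_1,X_2$ both satisfy \eqref{system 1}, then
\[
X_1=X_1AX_1=X_1AX_2=X_2AX_2=X_2,
\]
where the second equality uses $AX_1=F=AX_2$, the third uses $X_1A=E=X_2A$, and the last is the outer condition for $X_2$. I do not expect a genuine obstacle here; the argument is almost entirely bookkeeping. The only points demanding care are ordering the five identities so that each substitution in the verification of $X_0$ is legitimate (each collapse must turn a triple product $EA^{(1)}A$ or $AA^{(1)}F$ into a single factor only after $FA$ has been traded for $AE$), and observing that although the formula $EA^{(1)}F$ is manufactured from one particular inner inverse, its independence of that choice is automatic once uniqueness of $A^{(E,F)}$ has been proved.
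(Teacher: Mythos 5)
Your proof is correct, and two of its three parts coincide with the paper's argument: the verification that $X_0=EA^{(1)}F$ solves \eqref{system 1} is the same direct computation (the paper only writes out $XAX=X$ and leaves the other two equations to the reader, so yours is in fact more complete), and your telescoping uniqueness chain $X_1=X_1AX_1=X_1AX_2=X_2AX_2=X_2$ is the paper's chain $X_1=(X_1A)X_1=EX_1=(X_2A)X_1=X_2(AX_1)=X_2F=X_2(AX_2)=X_2$ in mirror image. Where you genuinely depart from the paper is the necessity direction. The paper deduces $AE=FA$ from Lemma \ref{lemma 2} (common solutions of a pair of one-sided equations) and then obtains $EA^{(1)}A=E$, $AA^{(1)}F=F$ from Lemma \ref{lemma 1} (Penrose's consistency criterion), whereas you bypass both lemmas with the one-line substitutions $EA^{(1)}A=(XA)A^{(1)}A=X(AA^{(1)}A)=XA=E$ and $AA^{(1)}F=AA^{(1)}(AX)=(AA^{(1)}A)X=AX=F$. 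Your route is more elementary and self-contained, and it buys something extra: since the computation works for an \emph{arbitrary} inner inverse $A^{(1)}$, it proves at once the strengthening that the paper states separately and without proof as Remark \ref{rem 1}, namely that conditions \eqref{condition 1} hold for some $A^{(1)}\in A\{1\}$ if and only if they hold for every $A^{(1)}\in A\{1\}$. What the paper's lemma-based route buys instead is context, exhibiting \eqref{system 1} as an instance of the classical Penrose/Ben-Israel--Greville solvability theory from which conditions \eqref{condition 1} naturally arise. Your closing observation, that the formula $EA^{(1)}F$ is independent of the choice of inner inverse ``automatically once uniqueness is proved,'' is also correct and is the same reasoning that underlies Remark \ref{rem 1}.
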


\begin{proof} $\Rightarrow)$ Since \eqref{system 1} is consistent, from  Lemma \ref{lemma 2} it follows that  each  of the matrix equations $XA=E$ and $AX=F$   has a solution in $X$ and  verify $AE=FA$. By applying Lemma \ref{lemma 1} to each of the above equations results that $EA^{(1)}A=E$ and $AA^{(1)}F=F$, respectively, for some $ A^{(1)}\in A\{1\}$. Clearly, $E$ and $F$ are idempotent since $XAX=X$. Thus, \eqref{condition 1} is satisfied. \\
$\Leftarrow)$ Consider the five conditions given in \eqref{condition 1}. It suffices to check that the matrix $X:=EA^{(1)}F$  satisfies the matrix equations \eqref{system 1}. In fact, \[XAX=EA^{(1)}FAEA^{(1)}F=(EA^{(1)}A)E^2A^{(1)}F=E^3A^{(1)}F=EA^{(1)}F=X.\] 
The matrix equations $XA=E$ and $AX=F$ can be verified similarly. 
\\ Finally, it remains to prove uniqueness. Let $X_1$ and $X_2$ be two  matrices satisfying \eqref{system 1}. Therefore, 
\begin{equation*}X_1=(X_1A)X_1=EX_1=(X_2A)X_1=X_2(AX_1)=X_2F=X_2(AX_2)=X_2. 
\end{equation*}
The proof is complete. 
\end{proof}

\begin{corollary}\label{corollary} Let $A\in \Cm$, $E\in \Cnn$, and $F\in \Cmm$. If $A^{(E,F)}$ exists, then 
\begin{equation*}
A^{(E,F)}=EX_1=X_2F,
\end{equation*}
where $X_1,X_2\in A\{2\}$.
\end{corollary}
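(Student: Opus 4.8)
The plan is to produce explicit outer inverses witnessing the two factorizations, rather than to characterize all admissible $X_1,X_2$. Writing $X:=A^{(E,F)}$, the most direct route exploits the fact that $X$ is \emph{itself} an outer inverse of $A$: indeed $XAX=X$ is precisely the first equation of the defining system \eqref{system 1}, so $X\in A\{2\}$. The remaining two equations $XA=E$ and $AX=F$ then say exactly that $E$ and $F$ act as a left and a right identity for $X$ in the relevant sense.

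Concretely, first I would compute $EX=(XA)X=XAX=X$, using $E=XA$ followed by $XAX=X$; this gives the left factorization $A^{(E,F)}=EX_1$ with $X_1:=X=A^{(E,F)}\in A\{2\}$. Symmetrically, $XF=X(AX)=XAX=X$, using $F=AX$, which yields $A^{(E,F)}=X_2F$ with $X_2:=X=A^{(E,F)}\in A\{2\}$. Thus the single choice $X_1=X_2=A^{(E,F)}$ already settles both identities, and the verification is a one-line manipulation of \eqref{system 1}.

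An alternative I would keep in reserve, should one prefer witnesses built directly from the representation in Theorem \ref{existence uniqueness}, is to start from $A^{(E,F)}=EA^{(1)}F$ and split the product in two ways. Setting $X_1:=A^{(1)}F$, the outer-inverse property follows by grouping $X_1AX_1=A^{(1)}F(AA^{(1)}F)=A^{(1)}F\cdot F=A^{(1)}F^2=A^{(1)}F$, where I invoke $AA^{(1)}F=F$ and $F^2=F$ from \eqref{condition 1}; then $EX_1=EA^{(1)}F=A^{(E,F)}$. Symmetrically, $X_2:=EA^{(1)}$ satisfies $X_2AX_2=(EA^{(1)}A)EA^{(1)}=E\cdot EA^{(1)}=E^2A^{(1)}=EA^{(1)}$ via $EA^{(1)}A=E$ and $E^2=E$, and $X_2F=EA^{(1)}F=A^{(E,F)}$.

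I do not expect a genuine obstacle here: the whole content is the observation that the $EF$-inverse is an outer inverse and that $E,F$ behave as one-sided identities on it. The only point requiring mild care arises in the alternative verifications, where one must apply the \emph{mixed} conditions $AA^{(1)}F=F$ and $EA^{(1)}A=E$ before invoking the idempotency $E^2=E$, $F^2=F$; the opposite grouping stalls the simplification even though the conclusion is unaffected.
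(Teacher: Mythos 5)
Your proposal is correct, and your ``reserve'' argument is in fact exactly the paper's proof: the authors start from $A^{(E,F)}=EA^{(1)}F$ (Theorem \ref{existence uniqueness}), set $X_1:=A^{(1)}F$ and $X_2:=EA^{(1)}$, and verify the outer-inverse property by precisely the groupings you describe, using $AA^{(1)}F=F$, $EA^{(1)}A=E$ and the idempotency of $E,F$ from \eqref{condition 1}. Your primary route, however, is genuinely different and even more elementary: taking $X_1=X_2=A^{(E,F)}$ itself, the identities $EX=(XA)X=XAX=X$ and $XF=X(AX)=XAX=X$ follow from the defining system \eqref{system 1} alone, with no appeal to inner inverses. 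Since the corollary only asserts the \emph{existence} of outer inverses $X_1,X_2$ with $A^{(E,F)}=EX_1=X_2F$, this degenerate witness does settle the statement as written. What it buys is brevity; what it loses is content. With your choice the corollary collapses to the tautology that $E$ and $F$ act as one-sided identities on $A^{(E,F)}$, whereas the paper's witnesses $A^{(1)}F$ and $EA^{(1)}$ exhibit the $EF$-inverse as a projector composed with an outer inverse manufactured from an \emph{arbitrary} inner inverse of $A$ --- a decomposition that foreshadows, and connects to, the generalized bilateral inverses $X_1AX_2$ (with $X_1\in A\{2\}$, $X_2\in A\{1\}$) treated in Section 4. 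So if you intend the corollary to carry that structural meaning, prefer your alternative verification; as a bare proof of the stated existence claim, your one-line argument suffices.
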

\begin{proof}
By Theorem \ref{existence uniqueness} we have $A^{(E,F)}=EA^{(1)}F$. Now, we consider $X_1:=A^{(1)} F$. Then, by using the second equation in \eqref{condition 1} we obtain \[X_1AX_1=A^{(1)} F A A^{(1)} F=A^{(1)} F^2=A^{(1)} F=X_1.\] 
Thus, $A^{(E,F)}=EX_1$ with $X_1\in A\{2\}$. \\
Similarly, by taking $X_2:=EA^{(1)}$, from the first equation in  \eqref{condition 1} we deduce that $X_2=X_2AX_2$. 
\end{proof}

\begin{remark}\label{rem 1} The condition \eqref{condition 1} in Theorem \ref{existence uniqueness} is equivalent to 
\[EA^{(1)}A=E, \quad AA^{(1)}F=F, \quad  AE=FA, \quad E^2=E, \quad\text{and}\quad F^2=F,\]
for an arbitrary $A^{(1)}\in A\{1\}$. 
\end{remark}

\begin{theorem} \label{characterization EF} Let $A\in \Cm$, $E\in \Cnn$, and $F\in \Cmm$.  The following statements are equivalent:
\begin{enumerate}[(a)]
\item $A^{(E,F)}$ exists;
\item $EA^{(1)}A=E$, $AA^{(1)}F=F$, $AE=FA$, $E^2=E$, and $F^2=F$,  for some $ A^{(1)}\in A\{1\}$;
\item $\Nu(A)\subseteq \Nu(E)$, $\Ra(F)\subseteq \Ra(A)$, $AE=FA$, $E^2=E$, and $F^2=F$;
\item $\Ra(E^*)\subseteq \Ra(A^*)$, $\Ra(F)\subseteq \Ra(A)$,  $AE=FA$, $E^2=E$, and $F^2=F$;
\item $A^{(2)}_{\Ra(E),\Nu(F)}$ exists, $\Nu(FA)=\Nu(E)$, and $\Ra(AE)=\Ra(F)$.
\end{enumerate}
Moreover, in this case 
\begin{equation}\label{representation EF 1}
A^{(E,F)}=EA^\dag F=A^{(2)}_{\Ra(E),\Nu(F)}.
\end{equation}
\end{theorem}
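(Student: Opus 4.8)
The plan is to take the equivalence (a)$\Leftrightarrow$(b) for granted from Theorem \ref{existence uniqueness}, then close the chain (b)$\Leftrightarrow$(c)$\Leftrightarrow$(d) by translating the algebraic conditions into range/null-space inclusions, and finally treat the outer-inverse characterization (e) together with the two representation formulas as the substantive last step. The whole argument rests on two elementary facts about an inner inverse $A^{(1)}\in A\{1\}$: the product $A^{(1)}A$ is idempotent with $\Nu(A^{(1)}A)=\Nu(A)$, and $AA^{(1)}$ is idempotent with $\Ra(AA^{(1)})=\Ra(A)$.

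For (b)$\Leftrightarrow$(c) I would rewrite $EA^{(1)}A=E$ as $E(I_n-A^{(1)}A)=0$, i.e. $\Ra(I_n-A^{(1)}A)\subseteq\Nu(E)$; since $I_n-A^{(1)}A$ is the complementary projector of $A^{(1)}A$, its range equals $\Nu(A^{(1)}A)=\Nu(A)$, so the condition is exactly $\Nu(A)\subseteq\Nu(E)$. Dually, $AA^{(1)}F=F$ reads $(I_m-AA^{(1)})F=0$, i.e. $\Ra(F)\subseteq\Ra(AA^{(1)})=\Ra(A)$. The three conditions $AE=FA$, $E^2=E$, $F^2=F$ are common to (b) and (c), so the equivalence is immediate; moreover both translated conditions are visibly independent of the chosen $A^{(1)}$, which is precisely Remark \ref{rem 1}. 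The step (c)$\Leftrightarrow$(d) is then the orthogonality duality $\Nu(A)\subseteq\Nu(E)\Leftrightarrow\Ra(E^{\ast})\subseteq\Ra(A^{\ast})$, obtained by taking orthogonal complements and using $\Nu(A)^{\perp}=\Ra(A^{\ast})$, $\Nu(E)^{\perp}=\Ra(E^{\ast})$.

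The heart of the matter is (a)$\Leftrightarrow$(e) and the identity $A^{(E,F)}=A^{(2)}_{\Ra(E),\Nu(F)}$. For (a)$\Rightarrow$(e), set $X=A^{(E,F)}$ and use $XAX=X$, $XA=E$, $AX=F$. From $XAX=X$ one gets $\Ra(X)=\Ra(XA)=\Ra(E)$ and $\Nu(X)=\Nu(AX)=\Nu(F)$, so $X$ is an outer inverse with range $\Ra(E)$ and null space $\Nu(F)$; by uniqueness $A^{(2)}_{\Ra(E),\Nu(F)}$ exists and equals $X$. The remaining two conditions follow from $FA=AXA$ and $AE=AXA$ together with $XAXA=XA$ and $AXAX=AX$, which yield $\Nu(FA)=\Nu(AXA)=\Nu(XA)=\Nu(E)$ and $\Ra(AE)=\Ra(AXA)=\Ra(AX)=\Ra(F)$. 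Conversely, for (e)$\Rightarrow$(a) I put $Y:=A^{(2)}_{\Ra(E),\Nu(F)}$ and verify \eqref{system 1}: $YAY=Y$ is built in; $YA$ is idempotent with $\Ra(YA)=\Ra(Y)=\Ra(E)$ and $\Nu(YA)=\Nu(FA)$, while $AY$ is idempotent with $\Nu(AY)=\Nu(Y)=\Nu(F)$ and $\Ra(AY)=\Ra(AE)$. Invoking $\Nu(FA)=\Nu(E)$, $\Ra(AE)=\Ra(F)$ and the idempotency of $E,F$, the projectors $YA$ and $E$ share range and null space, as do $AY$ and $F$; since an idempotent is determined by its range and null space, $YA=E$ and $AY=F$, so $Y=A^{(E,F)}$.

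Finally, the representation $A^{(E,F)}=EA^{\dag}F$ comes from Remark \ref{rem 1} by choosing $A^{(1)}=A^{\dag}\in A\{1\}$ in Theorem \ref{existence uniqueness}, and the equality with $A^{(2)}_{\Ra(E),\Nu(F)}$ is exactly what the (a)$\Rightarrow$(e) computation delivers. The main obstacle I anticipate is the clean bookkeeping in the (e)$\Rightarrow$(a) direction: the passage from \emph{equal ranges and null spaces} to the equalities $YA=E$ and $AY=F$ genuinely uses that $E$ and $F$ are projectors, so this hypothesis—shared with (b)--(d)—must be carried along explicitly and cannot be recovered from the three displayed conditions of (e) alone.
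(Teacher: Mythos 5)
Your proof is correct and follows essentially the same route as the paper's: (a)$\Leftrightarrow$(b) from Theorem \ref{existence uniqueness}, the identical algebraic-to-geometric translations for (b)$\Leftrightarrow$(c)$\Leftrightarrow$(d), identification of $A^{(E,F)}$ with $A^{(2)}_{\Ra(E),\Nu(F)}$ for (a)$\Leftrightarrow$(e), and the representation obtained by choosing $A^{(1)}=A^\dagger$. One small difference: you derive $\Nu(FA)=\Nu(E)$ and $\Ra(AE)=\Ra(F)$ directly from $XAXA=XA$ and $AXAX=AX$, whereas the paper re-imports the inclusions of (c) and (d) via chains such as $\Ra(F)=F\Ra(F)\subseteq F\Ra(A)=\Ra(FA)=\Ra(AE)\subseteq\Ra(F)$; your version is slightly more self-contained. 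Your closing caveat also deserves emphasis, because it is not merely an anticipated difficulty but an actual imprecision in the statement: condition (e) as displayed omits $E^2=E$ and $F^2=F$, yet both your argument and the paper's own proof (which concludes with $P_{\Ra(E),\Nu(E)}=E$ and $P_{\Ra(F),\Nu(F)}=F$) require idempotency of $E$ and $F$. Indeed (e)$\Rightarrow$(a) fails without it: take $A=I_n$ and $E=F=2P$ for any nonzero orthogonal projector $P$; then $A^{(2)}_{\Ra(E),\Nu(F)}=P$ exists, $\Nu(FA)=\Nu(P)=\Nu(E)$, and $\Ra(AE)=\Ra(P)=\Ra(F)$, but no $X$ can satisfy both $XAX=X$ and $XA=E$, since these force $E$ to be idempotent while $(2P)^2\neq 2P$. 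So you were right that idempotency must be carried along explicitly in (e).
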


\begin{proof}
(a) $\Leftrightarrow$ (b) Follows from Theorem \ref{existence uniqueness}. \\
(b) $\Leftrightarrow$ (c) Note that the equation $EA^{(1)}A=E$ is equivalent to $\Ra(I_n-A^{(1)}A)\subseteq \Nu(E)$ which in turn is equivalent to $\Nu(A^{(1)}A)=\Nu(A)\subseteq \Nu(E)$. Similarly,   $AA^{(1)}F=F$ is true if and only if $\Ra(F)\subseteq \Ra(A)$.  \\
(c) $\Leftrightarrow$ (d) It is consequence of the classical property $\Nu(A)=\Ra(A^*)^\perp$. \\
(a) $\Rightarrow$ (e)
Let $X:=A^{(E,F)}$. As $X$ is an outer inverse we have 
\[\Nu(X)=\Nu(AX)=\Nu(F) \quad \text{and} \quad \Ra(X)=\Ra(XA)=\Ra(E).\] Therefore, 
\begin{equation}\label{Equality 1}
A^{(E,F)}=A^{(2)}_{\Ra(E),\Nu(F)}. 
\end{equation}
From equivalence (a) $\Leftrightarrow$ (d)   we get 
\[\Ra(F)=\Ra(F^2)=F\Ra(F)\subseteq F\Ra(A)=\Ra(FA)=\Ra(AE)\subseteq \Ra(F).\] 
Similarly, as (a) $\Leftrightarrow$ (c) we obtain 
\[\Ra(E^*)=\Ra((E^*)^2)=E^*\Ra(E^*)\subseteq E^*\Ra(A^*)=\Ra((AE)^*)=\Ra((FA)^*),\] which implies $\Nu(FA)\subseteq \Nu(E)\subseteq \Nu(AE)=\Nu(FA)$. \\
(e) $\Rightarrow$ (a)  Let $X:=A^{(2)}_{\Ra(E),\Nu(F)}$ be such that $\Nu(FA)=\Nu(E)$ and $\Ra(AE)=\Ra(F)$.  From \eqref{outer 2TS}, it follows that $XAX=X$, 
\begin{eqnarray*}
XA &=& P_{\Ra(E),(A^*(\Nu(F)^{\perp}))^{\perp}} \\
&=& P_{\Ra(E),\Ra((FA)^*)^{\perp}} \\
&=& P_{\Ra(E),\Nu(FA)} \\ 
&=& P_{\Ra(E),\Nu(E)}=E,
\end{eqnarray*}
and 
\begin{eqnarray*}
AX &=& P_{A(\Ra(E)),\Nu(F)} \\
&=& P_{\Ra(AE),\Nu(F)} \\
&=& P_{\Ra(F),\Nu(F)}=F.
\end{eqnarray*}
Finally,  the first equality in \eqref{representation EF 1} follows from Remark \ref{rem 1}, while the second equality is due to \eqref{Equality 1}. 
\end{proof}

An example of how to calculate the $EF$-inverse is shown below.

\begin{example}\label{example 1} Consider the matrix
\[A= \left[\begin{array}{rrr}
a & 0 & 0 \\
0 & a & 0  \\
0 & 0 & 0   \\
0 & 0 & 0 
\end{array}\right], \quad a\in \mathbb{R} \setminus\{0\},\]
in conjunction with the projectors 
\[E= \left[\begin{array}{rrr}
\frac{1}{2} & \frac{1}{2} & 0 \\
\frac{1}{2} &  \frac{1}{2} & 0  \\
0 &  0 &  0  
\end{array}\right] \quad \text{and}\quad F=\left[\begin{array}{rrrr}
\frac{1}{2} & \frac{1}{2} & 0 & 0\\
\frac{1}{2} &  \frac{1}{2} & 0  & 0 \\
0 &  0 &  0  &0 \\
0 & 0 & 0 & 0
\end{array}\right].\]
These matrices satisfy $EA^\dag A=E$, $AA^\dag F=F$, $AE=FA$, $E^2=E$, and $F^2=F$, which is a guarantee for the existence of the $EF$-inverse of $A$ given by
\[
A^{(E,F)}=EA^\dag F=\left[\begin{array}{rrrr}
\frac{1}{2a} & \frac{1}{2a} & 0 & 0\\
\frac{1}{2a} & \frac{1}{2a} & 0 & 0 \\
0 & 0 & 0  & 0
\end{array}\right].
\]
\end{example}

Now, we show that the $crCR$-inverse is a particular case of the $EF$-inverse. Before, we prove a new characterization of the  $crCR$-inverse.  

\begin{theorem}\label{system BC inverse} Let $A,B,C\in \Cnn$. If $A_{B,C}$ exists, then it is the unique matrix $X\in \Cnn$ such that 
\begin{equation}\label{XA and AX}
XAX = X,\quad XA=P_{\Ra(B),\Nu(CA)},\quad \text{and} \quad AX=P_{\Ra(AB),\Nu(C)}.
\end{equation}
\end{theorem}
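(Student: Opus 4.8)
The plan is to take $X:=A_{B,C}$, verify the three equations in \eqref{XA and AX} directly, and then obtain uniqueness for free from the uniqueness argument already carried out in Theorem \ref{existence uniqueness}. I would start from the fact recalled in the Introduction that $A_{B,C}=A^{\Vert (B,C)}$, so $X$ satisfies the four conditions in \eqref{BC inverse}: $XAB=B$, $CAX=C$, $\Ra(X)\subseteq\Ra(B)$, and $\Ra(X^{\ast})\subseteq\Ra(C^{\ast})$ (the last being equivalent to $\Nu(C)\subseteq\Nu(X)$). Writing $r:=\rk(B)$ and using the existence criterion $\rk(CAB)=\rk(B)=\rk(C)=r$ together with $\rk(CAB)\le\rk(CA)\le\rk(C)$ and $\rk(CAB)\le\rk(AB)\le\rk(B)$, I would first record the auxiliary rank identities $\rk(CA)=\rk(AB)=r$, which are needed for the dimension counts below.

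Since $\Ra(X)\subseteq\Ra(B)$, I can write $X=BW$ for some $W\in\Cnn$, whence $XAX=(XAB)W=BW=X$; thus $X$ is an outer inverse of $A$, and consequently both $XA$ and $AX$ are idempotent, e.g.\ $(XA)^2=(XAX)A=XA$. To identify $XA=P_{\Ra(B),\Nu(CA)}$ I would compute its range and null space. The inclusion $\Ra(XA)\subseteq\Ra(X)\subseteq\Ra(B)$ and the identity $XAB=B$ (which gives $\Ra(B)\subseteq\Ra(XA)$) together yield $\Ra(XA)=\Ra(B)$. For the null space, $\Ra(X^{\ast})\subseteq\Ra(C^{\ast})$ lets me write $X=VC$, so $\Nu(CA)\subseteq\Nu(VCA)=\Nu(XA)$; since $\dim\Nu(XA)=n-\rk(XA)=n-r=n-\rk(CA)=\dim\Nu(CA)$, this inclusion forces $\Nu(XA)=\Nu(CA)$, giving $XA=P_{\Ra(B),\Nu(CA)}$.

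The third equation is handled symmetrically. From $X=BW$ I get $\Ra(AX)\subseteq\Ra(AB)$, while $XAB=B$ gives $AB=AXAB$, hence $\Ra(AB)\subseteq\Ra(AX)$ and thus $\Ra(AX)=\Ra(AB)$; and $CAX=C$ gives $\Nu(AX)\subseteq\Nu(CAX)=\Nu(C)$, which together with $\dim\Nu(AX)=n-r=\dim\Nu(C)$ yields $\Nu(AX)=\Nu(C)$, that is, $AX=P_{\Ra(AB),\Nu(C)}$. (These identifications also justify a posteriori that the two projectors are well defined, since the idempotents $XA$ and $AX$ exhibit the required direct-sum decompositions of $\mathbb{C}^{n}$.)

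Finally, uniqueness is immediate: equations \eqref{XA and AX} are exactly the $EF$-system \eqref{system 1} with $E=P_{\Ra(B),\Nu(CA)}$ and $F=P_{\Ra(AB),\Nu(C)}$, and the uniqueness argument in the proof of Theorem \ref{existence uniqueness} shows that any two matrices satisfying such a system must coincide. The step I expect to be the main obstacle is pinning down the two null spaces: each representation ($X=VC$, respectively the identity $CAX=C$) supplies only one inclusion, and closing the gap relies on the dimension count, which is precisely where the auxiliary rank identities $\rk(CA)=\rk(AB)=r$ extracted from the existence criterion become essential.
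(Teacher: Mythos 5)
Your proof is correct, and its overall skeleton matches the paper's: take $X:=A_{B,C}$, show $X$ is an outer inverse via $\Ra(X)\subseteq\Ra(B)$ and $XAB=B$, identify the range and null space of the idempotents $XA$ and $AX$, conclude by uniqueness of oblique projectors, and recycle the uniqueness argument of Theorem \ref{existence uniqueness}. The genuine difference lies in how the two null spaces are pinned down --- exactly the step you flagged as the main obstacle. You close each gap with a dimension count, which forces you to invoke the external Rao--Mitra existence criterion $\rk(CAB)=\rk(B)=\rk(C)$ in order to obtain the auxiliary identities $\rk(CA)=\rk(AB)=r$. The paper avoids ranks entirely: it first upgrades the two constraint inclusions to equalities at the level of $X$ itself, namely $\Ra(X)=\Ra(B)$ (from $XAB=B$) and $\Nu(X)=\Nu(C)$ (the missing inclusion $\Nu(X)\subseteq\Nu(C)$ follows directly from $CAX=C$: if $Xv=0$ then $Cv=CAXv=0$), and then transfers these through the outer-inverse identity $X=XAX$, getting $\Ra(XA)=\Ra(X)=\Ra(B)$, $\Nu(AX)=\Nu(X)=\Nu(C)$, $\Ra(AX)=A(\Ra(X))=\Ra(AB)$, and $\Nu(XA)=\{v:\,Av\in\Nu(X)\}=\Nu(CA)$. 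What each approach buys: yours is more computational but leans on finite-dimensionality and on a cited existence theorem, so it is strictly a matrix argument; the paper's is self-contained (only the four defining conditions \eqref{BC inverse} are used) and, being purely inclusion-based, is closer in spirit to arguments that survive in the ring and semigroup settings where the $(b,c)$-inverse originated and where dimension counting is unavailable.
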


\begin{proof}
Let $X:=A_{B,C}$ satisfying \eqref{BC inverse}. It follows that $\Ra(X)=\Ra(B)$ and $\Nu(X)=\Nu(C)$.  Since $\Ra(X)\subseteq \Ra(B)$, there exists some matrix $D$ such that $$X=BD=(XAB)D=XA(BD)=XAX.$$ Consequently,
\begin{equation} \label{f1} \Ra(XA)=\Ra(X)=\Ra(B)\quad \text{and} \quad \Nu(AX)=\Nu(X)=\Nu(C).
\end{equation}
Now, according to \eqref{f1} we have $$\Ra(AX)= \Ra(AB) \quad \text{and} \quad  \Nu(XA)=\Nu(CA).$$
Since $XAX=X$, clearly $XA$ and $AX$ are idempotent. Therefore, from the uniqueness of an oblique projector we obtain the last two equations in \eqref{XA and AX}. \\
Finally, the uniqueness follows similarly to the uniqueness proof of Theorem  \ref{existence uniqueness}. 
\end{proof}

\begin{corollary}
\label{cor 1} Let $A,B,C\in {{\mathbb{C}}^{n\times n}}$. If $A_{B,C}$ exists, then  $A_{B,C}=A^{(E,F)}$,
where  $E=P_{{\mathcal{R}}(B),{\mathcal{N}}(CA)}$ and $F=P_{{
\mathcal{R}}(AB),{\mathcal{N}}(C)}$.
\end{corollary}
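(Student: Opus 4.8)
The plan is to read the identification straight off Theorem \ref{system BC inverse}, since essentially all the work has already been done there. First I would set $E := P_{\mathcal{R}(B),\mathcal{N}(CA)}$ and $F := P_{\mathcal{R}(AB),\mathcal{N}(C)}$ and observe that both are genuine idempotents: by the very notation $P_{\mathcal{M},\mathcal{N}}$ they are projectors, and in any case, writing $X := A_{B,C}$, Theorem \ref{system BC inverse} gives $E = XA$ and $F = AX$ with $XAX = X$, so $E^2 = (XA)(XA) = X(AXA) \cdots$ and likewise for $F$ are idempotent. Thus $E$ and $F$ are admissible data for Definition \ref{EFinv}.

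Next I would point out that Theorem \ref{system BC inverse} asserts precisely that $X = A_{B,C}$ satisfies
\[
XAX = X,\qquad XA = E,\qquad AX = F,
\]
which is exactly the system \eqref{system 1} defining an $EF$-inverse of $A$ for this particular choice of $E$ and $F$. Hence $A_{B,C}$ is a common solution of \eqref{system 1}, so by Theorem \ref{existence uniqueness} the $EF$-inverse $A^{(E,F)}$ exists. Invoking the uniqueness part of that theorem—equivalently the short chain $X_1 = X_1 A X_1 = E X_1 = X_2 A X_1 = X_2 (A X_1) = X_2 F = X_2$ used in its proof—any two solutions of \eqref{system 1} coincide, whence $A_{B,C} = A^{(E,F)}$.

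I do not expect a genuine obstacle here: the entire substantive content has been front-loaded into Theorem \ref{system BC inverse}, whose conclusion \eqref{XA and AX} is literally the $EF$-inverse system \eqref{system 1} under the substitution $E = P_{\mathcal{R}(B),\mathcal{N}(CA)}$, $F = P_{\mathcal{R}(AB),\mathcal{N}(C)}$. The corollary is therefore a notational matching followed by an appeal to uniqueness, and the only point warranting an explicit word is the idempotency of $E$ and $F$, which is immediate from $XAX = X$.
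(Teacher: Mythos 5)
Your proposal is correct and follows essentially the same route as the paper: invoke Theorem \ref{system BC inverse} to see that $X:=A_{B,C}$ satisfies $XAX=X$, $XA=E$, $AX=F$ for the stated projectors, then conclude $A_{B,C}=A^{(E,F)}$ from the existence--uniqueness statement of Theorem \ref{existence uniqueness} and Definition \ref{EFinv}. (Only trivial remark: in your idempotency aside the grouping should be $E^2=(XAX)A=XA$, not $X(AXA)$; but this point is superfluous anyway, since $E$ and $F$ are projectors by the $P_{\mathcal{M},\mathcal{N}}$ notation.)
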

\begin{proof} Assume that $A_{B,C}$ exists. By Theorem \ref{system BC inverse} we have  that $X:= A_{B,C}$ verifies $XAX = X$, $XA=E$,  and $AX=F$. The rest of the equalities are a consequence of Theorem \ref{existence uniqueness} and Definition \ref{EFinv}. 
\end{proof}

\begin{corollary}
Let $A,E,F \in \Cnn$. If $A^{(E,F)}$ exists, then $A_{E,F}$ exists and $A^{(E,F)}=A_{E,F}$.
\end{corollary}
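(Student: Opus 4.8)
The plan is to show that when $A^{(E,F)}$ exists, the projectors $E$ and $F$ automatically satisfy the Rao--Mitra constraints $c,r,C,R$ with $B:=E$ and $C:=F$, so that $A_{E,F}$ exists and the two inverses coincide by uniqueness. First I would invoke Theorem~\ref{existence uniqueness} to write $X:=A^{(E,F)}=EA^{(1)}F$, knowing it satisfies $XAX=X$, $XA=E$, and $AX=F$, together with the five conditions in \eqref{condition 1}. The goal is to verify that $X$ meets each of the four $crCR$-constraints relative to the pair $(E,F)$.

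Next I would check the constraints one at a time. For constraint $c$ (that $\Ra(X)\subseteq\Ra(E)$): since $XA=E$ forces $\Ra(E)=\Ra(XA)\subseteq\Ra(X)$, and since $X=XAX=EX$ gives $\Ra(X)\subseteq\Ra(E)$, we actually get $\Ra(X)=\Ra(E)$, so $c$ holds. Dually, for constraint $r$ (that $\Ra(X^*)\subseteq\Ra(F^*)$): from $AX=F$ we have $X=XAX=XF$, whence $X^*=F^*X^*$ and $\Ra(X^*)\subseteq\Ra(F^*)$. For constraint $C$ (that $XA$ is the identity on $\Ra(E)$): since $XA=E$ and $E^2=E$, for any $y=Ez\in\Ra(E)$ we have $(XA)y=Ey=E^2z=Ez=y$, so $XA$ restricts to the identity on $\Ra(E)$. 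Analogously, for constraint $R$ (that $(AX)^*$ is the identity on $\Ra(F^*)$): from $AX=F$ and $F^2=F$ we get $(AX)^*=F^*$ with $(F^*)^2=F^*$, and the same idempotency argument shows $F^*$ acts as the identity on $\Ra(F^*)$. This establishes that $X$ is a $crCR$-inverse of $A$ with respect to $(E,F)$.

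Finally I would appeal to the existence-and-uniqueness statement for the $crCR$-inverse recalled after Definition~\ref{crCR}: once a matrix satisfying all four constraints is exhibited, $A_{E,F}$ exists and equals that matrix. Hence $A_{E,F}=X=A^{(E,F)}$. Alternatively, and perhaps more cleanly, one can use the rank criterion $\rk(FAE)=\rk(F)=\rk(E)$: the identities $\Ra(X)=\Ra(E)$ and $\Nu(X)=\Nu(F)$ derived above (the latter from $AX=F$ giving $\Nu(F)\subseteq\Nu(X)$ and $X=XF$ giving the reverse inclusion after noting $\Nu(X)=\Nu(AX)=\Nu(F)$) pin down the ranks, and consistency of the defining equations follows from the conditions in \eqref{condition 1}.

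The main obstacle I anticipate is bookkeeping the precise meaning of the Type~II constraints $C$ and $R$ (``$XA$ is an identity on $\Ra(E)$'' and ``$(AX)^*$ is an identity on $\Ra(F^*)$'') and confirming that idempotency of $E$ and $F$ is exactly what is needed, rather than any additional hypothesis. The key insight making everything routine is that the $EF$-inverse already hands us $XA=E$ and $AX=F$ as genuine idempotents, so the range inclusions and the identity-on-subspace conditions become immediate consequences of $E^2=E$ and $F^2=F$ together with the outer-inverse relation $XAX=X$; no delicate rank computation is strictly necessary once these are in hand.
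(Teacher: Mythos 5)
Your proof is correct, but it follows a genuinely different route from the paper's. The paper's argument is a two-step citation: by Theorem~\ref{characterization EF}, existence of $A^{(E,F)}$ yields existence of the outer inverse with prescribed range and null space $A^{(2)}_{\Ra(E),\Nu(F)}$ together with the equality $A^{(E,F)}=A^{(2)}_{\Ra(E),\Nu(F)}$, and then Raki\'c's Theorem~1.5 in \cite{Ra} identifies $A^{(2)}_{\Ra(E),\Nu(F)}$ with the $crCR$-inverse $A_{E,F}$. You instead verify the four Rao--Mitra constraints $c$, $r$, $C$, $R$ directly from the defining equations $XAX=X$, $XA=E$, $AX=F$: the outer-inverse relation gives $X=EX=XF$, hence the range inclusions $\Ra(X)\subseteq\Ra(E)$ and $\Ra(X^*)\subseteq\Ra(F^*)$, while idempotency of $E$ and $F$ (itself forced by $XAX=X$) gives the identity-on-subspace conditions; the Rao--Mitra existence-and-uniqueness statement quoted after Definition~\ref{crCR} then pins down $A_{E,F}=X$, since exhibiting one matrix satisfying the constraints establishes existence, hence the rank condition $\rk(FAE)=\rk(F)=\rk(E)$, hence uniqueness. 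Your argument is more self-contained: it bypasses both Theorem~\ref{characterization EF} and Raki\'c's theorem, uses essentially only the idempotency conditions from \eqref{condition 1}, and exposes the mechanism (that $XA=E$ and $AX=F$ are handed to you as projectors) rather than routing through prescribed-range-and-null-space machinery. The paper's proof is shorter and, as a by-product, records the conceptually useful identification $A^{(E,F)}=A^{(2)}_{\Ra(E),\Nu(F)}$, which it reuses elsewhere. One minor remark: your closing ``alternative'' via the rank criterion is redundant and sketchier than the main line; the constraint verification alone already completes the proof.
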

\begin{proof} If $A^{(E,F)}$ exists, by Theorem \ref{characterization EF}, $A^{(2)}_{R(E),N(F)}$ exists  and \linebreak $A^{(E,F)}=A^{(2)}_{R(E),N(F)}$. Then, from Theorem 1.5 in \cite{Ra}, $A_{E,F}$ exists and \linebreak  $A^{(2)}_{R(E),N(F)}=A_{E,F}$. Thus $A^{(E,F)}=A_{E,F}$. 
\end{proof}

The following example illustrates the previous corollary. 
\begin{example}\label{example 2} Consider the matrix
\[A= \left[\begin{array}{rrr}
a & 0 & 0 \\
0 & b & 0  \\
0 & 0 & 0    
\end{array}\right], \quad a,b\in \mathbb{R} \setminus\{0\},\]
in conjunction with the projectors 
\[E= \left[\begin{array}{rrr}
1 & 1 & 0 \\
0 &  0 & 0  \\
0 &  0 &  0  
\end{array}\right] \quad \text{and}\quad F=\left[\begin{array}{rrr}
1 & \frac{a}{b} & 0 \\
0 &  0 & 0   \\
0 &  0 &  0   
\end{array}\right].\]
These matrices satisfy $EA^\dag A=E$, $AA^\dag F=F$, $AE=FA$, $E^2=E$, and $F^2=F$, which is a guarantee for the existence of the $EF$-inverse of $A$ given by
\[
A^{(E,F)}=EA^\dag F=\left[\begin{array}{rrr}
\frac{1}{a} & \frac{1}{b} & 0 \\
0 & 0 & 0  \\
0 & 0 & 0  
\end{array}\right].
\] 
On the other hand, the $crCR$-inverse is given by
\[
A_{E,F}=E(FAE)^\dag F=\left[\begin{array}{rrr}
\frac{1}{a} & \frac{1}{b} & 0 \\
0 & 0 & 0 \\
0 & 0 & 0  
\end{array}\right].
\]
\end{example}

We finish this section with a result about when the $EF$-inverse is  an inner inverse  of the matrix. 

\begin{theorem}\label{1} Let $A\in \Cm$, $E\in \Cnn$, and $F\in \Cmm$. If $A^{(E,F)}$ exists, then the following statements are equivalent:
\begin{enumerate}[(a)]
\item  $A^{(E,F)}\in A\lbrace 1 \rbrace$;
\item  $A=AE$;
\item  $A=FA$;
\item  $\Ra(A)\subseteq \Ra(F)$;
\item  $\Nu(E)\subseteq \Nu(A)$.
\end{enumerate}
\end{theorem}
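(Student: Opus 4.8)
The plan is to exploit the defining relations of the $EF$-inverse directly, writing $X:=A^{(E,F)}$. By Definition \ref{EFinv} (equivalently by Theorem \ref{existence uniqueness}) this $X$ satisfies $XAX=X$, together with $XA=E$ and $AX=F$, where both $E$ and $F$ are idempotent. The statement (a) is simply the inner-inverse condition $AXA=A$. The whole proof rests on the observation that $AXA$ can be rewritten in two ways by substituting the two projector identities $XA=E$ and $AX=F$.

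First I would establish the two equivalences involving (a). Using $XA=E$ we get $AXA=A(XA)=AE$, so $AXA=A$ holds if and only if $AE=A$; this is exactly (a)$\Leftrightarrow$(b). Dually, using $AX=F$ we get $AXA=(AX)A=FA$, so $AXA=A$ holds if and only if $FA=A$, giving (a)$\Leftrightarrow$(c). These two reductions are immediate and carry no content beyond the substitutions.

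Next I would translate (b) and (c) into the range/null-space inclusions (e) and (d) by using idempotency. For (b)$\Leftrightarrow$(e), note $AE=A$ is equivalent to $A(I_n-E)=0$, i.e.\ $\Ra(I_n-E)\subseteq\Nu(A)$; since $E^2=E$ we have $\Ra(I_n-E)=\Nu(E)$, so this is precisely $\Nu(E)\subseteq\Nu(A)$. Symmetrically, for (c)$\Leftrightarrow$(d), $FA=A$ is equivalent to $(I_m-F)A=0$, i.e.\ $\Ra(A)\subseteq\Nu(I_m-F)$; since $F^2=F$ we have $\Nu(I_m-F)=\Ra(F)$, which gives $\Ra(A)\subseteq\Ra(F)$. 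Chaining (e)$\Leftrightarrow$(b)$\Leftrightarrow$(a)$\Leftrightarrow$(c)$\Leftrightarrow$(d) then closes the cycle of equivalences.

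There is no genuine obstacle here; the argument is essentially a bookkeeping exercise once the two expressions for $AXA$ are in hand. The only point requiring a moment's care is the idempotent identities $\Ra(I_n-E)=\Nu(E)$ and $\Nu(I_m-F)=\Ra(F)$, which are where the hypotheses $E^2=E$ and $F^2=F$ enter, and it is worth stating them explicitly so that the passages (b)$\Leftrightarrow$(e) and (c)$\Leftrightarrow$(d) are not mistaken for inclusions that hold only in one direction.
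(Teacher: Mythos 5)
Your proposal is correct and takes essentially the same approach as the paper: substitute the projector identities $XA=E$ and $AX=F$ into $AXA$, then translate $AE=A$ and $FA=A$ into the inclusions (d) and (e) using idempotency of $E$ and $F$. The only (cosmetic) difference is that the paper proves (b) $\Rightarrow$ (a) via the representation $A^{(E,F)}=EA^{\dagger}F$ and obtains (b) $\Leftrightarrow$ (c) from the identity $AE=FA$, whereas you get both equivalences directly from the two expressions $AXA=AE$ and $AXA=FA$, which is marginally more self-contained but not a different argument.
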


\begin{proof}
(a) $\Rightarrow$ (b). By \eqref{system 1} we get $A=AA^{(E,F)}A=AE$. \\
(b) $\Rightarrow$ (a). From  Theorem \ref{characterization EF} we have $AA^{(E,F)}A=AEA^\dag FA=AA^\dag A=A$. 
\\ (b) $\Leftrightarrow$ (c). Follows from Theorem \ref{characterization EF}. 
\\ (c) $\Leftrightarrow$ (d). Note that $A=FA$ holds if and only if  $(I_m-F)A=0$, which in turn is equivalent to  $\Ra(A) \subseteq \Nu(I_m-F)=\Ra(F)$. \\
(d) $\Leftrightarrow$ (b). Clearly, $A=AE$ holds if and only if $\Nu(E)=\Ra(I_n-E)\subseteq \Nu(A)$.  \\ The proof is complete. 
\end{proof}

\section{Canonical form of the $EF$-inverse}

In this section we exhibit an interesting result about canonical form of the $EF$-inverse of a rectangular matrix. 
  The tool which we use is the Singular Value Decomposition (SVD).  We then discuss immediate consequences of a such canonical form. We also give some more characterizations of the $EF$-inverse.

\begin{theorem}(SVD)\label{svd} Let $A\in \Cm$ be a nonnull matrix of rank $r>0$ and let $\sigma_1\ge\sigma_2\ge\cdots \ge \sigma_r>0$ be the singular values of $A$.  Then there exist unitary matrices $U\in \Cmm$ and $V\in \Cnn$ such that
$A=U\begin{bmatrix}                                                                              \Sigma & 0 \\
0 & 0
\end{bmatrix}V^*$, where $\Sigma=diag(\sigma_1, \sigma_2, \hdots, \sigma_r)$. In particular, the Moore-Penrose inverse of $A$ is given by
\begin{equation*}\label{MP respecto to SVD}
A^\dag=V\begin{bmatrix}                                                                              \Sigma^{-1} & 0 \\
0 & 0
\end{bmatrix}U^*.
\end{equation*}
\end{theorem}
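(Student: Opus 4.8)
The final statement is Theorem~\ref{svd}, the Singular Value Decomposition together with the resulting formula for the Moore--Penrose inverse. My plan is to treat the existence of the SVD as a classical fact obtained from spectral theory, and then to verify the pseudoinverse formula by checking the four Penrose equations directly in the decomposed coordinates.

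\medskip

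\noindent\emph{Existence of the SVD.} First I would form the Hermitian positive semidefinite matrix $A^*A\in\Cnn$. By the spectral theorem there is a unitary $V\in\Cnn$ diagonalizing it, and since $A^*A$ is positive semidefinite its eigenvalues are nonnegative; I order them as $\sigma_1^2\ge\sigma_2^2\ge\cdots\ge\sigma_r^2>0$ with the remaining $n-r$ eigenvalues equal to zero, where $r=\rk(A^*A)=\rk(A)$. Writing $V=[V_1\ V_2]$ with $V_1$ carrying the columns for the positive eigenvalues, I would set $\Sigma=\diag(\sigma_1,\dots,\sigma_r)$ and define the first block of left singular vectors by $U_1:=AV_1\Sigma^{-1}$. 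A short computation using $A^*AV_1=V_1\Sigma^2$ shows $U_1^*U_1=I_r$, so the columns of $U_1$ are orthonormal; I then extend $U_1$ to a full unitary matrix $U=[U_1\ U_2]\in\Cmm$ by completing to an orthonormal basis of $\mathbb{C}^m$. Finally I would check that $U^*AV$ has the claimed block-diagonal form: the relation $AV_1=U_1\Sigma$ handles the top-left block, $AV_2=0$ (because $V_2$ spans the kernel of $A^*A$, which equals $\Nu(A)$) kills the right-hand blocks, and $U_2^*AV_1=U_2^*U_1\Sigma=0$ kills the bottom-left block. This yields $A=U\begin{bmatrix}\Sigma&0\\0&0\end{bmatrix}V^*$.

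\medskip

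\noindent\emph{The Moore--Penrose formula.} With the decomposition in hand, I would verify that the proposed matrix $X:=V\begin{bmatrix}\Sigma^{-1}&0\\0&0\end{bmatrix}U^*$ satisfies the four defining Penrose equations, invoking the uniqueness of $A^\dagger$. Using $U^*U=I_m$ and $V^*V=I_n$, the products $AX$ and $XA$ collapse to $U\begin{bmatrix}I_r&0\\0&0\end{bmatrix}U^*$ and $V\begin{bmatrix}I_r&0\\0&0\end{bmatrix}V^*$ respectively; these are clearly Hermitian idempotents, which gives the two symmetry conditions immediately, and multiplying once more on the appropriate side recovers $AXA=A$ and $XAX=X$. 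Since the four equations hold and the Moore--Penrose inverse is unique, $X=A^\dagger$.

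\medskip

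\noindent I do not expect a serious obstacle here, since both halves are standard. The only delicate point is the organization of the existence argument: one must be careful to define $U_1$ from $A$ and $V_1$ (rather than independently diagonalizing $AA^*$) so that the cross relation $A=U_1\Sigma V_1^*$ is automatic, and to confirm that the multiplicities and the count $n-r$ of zero eigenvalues are handled consistently with $\rk(A)=r$. Once the blocks $AV_1=U_1\Sigma$, $AV_2=0$, and $U_2^*U_1=0$ are in place, everything else is routine block arithmetic.
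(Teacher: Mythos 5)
Your proof is correct: the spectral-theorem construction of $V$ from $A^*A$, the definition $U_1:=AV_1\Sigma^{-1}$ with the extension to a unitary $U$, and the verification of the four Penrose equations for $V\begin{bmatrix}\Sigma^{-1}&0\\0&0\end{bmatrix}U^*$ are all sound, and this is the standard textbook argument. Note, however, that the paper itself gives no proof of this theorem---it is stated purely as classical background (the SVD and the well-known expression of $A^\dagger$ in SVD coordinates, cf.\ Ben-Israel and Greville), used only as a tool for the canonical form of the $EF$-inverse in Theorem \ref{canonical form rect}---so there is no authorial proof to compare against; your reconstruction simply supplies the omitted classical argument.
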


\begin{theorem}\label{canonical form rect} Let $A\in \Cm$ be written as in Theorem  \ref{svd} and let 
\begin{equation}\label{projectors E and F}
E=V\begin{bmatrix}
         E_1 & E_2 \\
          E_3 & E_4 \\
        \end{bmatrix}V^*\in \Cnn, \quad F=U\begin{bmatrix}
         F_1 & F_2 \\
          F_3 & F_4 \\
        \end{bmatrix}U^*\in \Cmm.
\end{equation}
 Then the following statements are equivalent:
\begin{enumerate}[(a)]
\item  $A^{(E,F)}$ exists;
\item  $E_1^2=E_1$, $F_1^2=F_1$, $\Sigma E_1=F_1\Sigma$, $E_3E_1=E_3$, $F_1F_2=F_2$, $E_2=0$, $E_4=0$, $F_3=0$, and $F_4=0$;
\item  $E_1^2=E_1$, $F_1^2=F_1$, $\Sigma E_1=F_1\Sigma$, $\Nu(E_1)\subseteq \Nu(E_3)$, $\Ra(F_2)\subseteq \Ra(F_1)$, $E_2=0$, $E_4=0$, $F_3=0$, and $F_4=0$.
\end{enumerate}
In this case, 
\begin{equation}\label{canonical form}
 A^{(E,F)}= V\begin{bmatrix}
               E_1 \Sigma^{-1} & E_1 \Sigma^{-1} F_2\\
               E_3 \Sigma^{-1} & E_3 \Sigma^{-1} F_2 \\
              \end{bmatrix} U^*.
\end{equation}
\end{theorem}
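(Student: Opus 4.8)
The plan is to use the SVD of $A$ from Theorem \ref{svd} to reduce the abstract characterization in Theorem \ref{characterization EF} to explicit block conditions, exploiting the fact that conjugation by the unitary matrices $U$ and $V$ preserves all the relevant algebraic properties (idempotency, ranges, null spaces). First I would substitute the block forms \eqref{projectors E and F} of $E$ and $F$, together with $A=U\left[\begin{smallmatrix}\Sigma & 0\\ 0 & 0\end{smallmatrix}\right]V^*$ and $A^\dagger=V\left[\begin{smallmatrix}\Sigma^{-1} & 0\\ 0 & 0\end{smallmatrix}\right]U^*$, into each of the five defining conditions of part (b) of Theorem \ref{characterization EF}, namely $E^2=E$, $F^2=F$, $EA^\dagger A=E$, $AA^\dagger F=F$, and $AE=FA$. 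Since $U$ and $V$ are unitary, each matrix identity collapses to an identity between the interior $2\times 2$ block matrices, which I can then expand block-by-block.

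The key computations I expect are routine but must be organized carefully. Conjugating $A^\dagger A=VV^*$-type projectors: note $A^\dagger A=V\left[\begin{smallmatrix}I_r & 0\\ 0 & 0\end{smallmatrix}\right]V^*$ and $AA^\dagger=U\left[\begin{smallmatrix}I_r & 0\\ 0 & 0\end{smallmatrix}\right]U^*$, so the condition $EA^\dagger A=E$ becomes $\left[\begin{smallmatrix}E_1 & E_2\\ E_3 & E_4\end{smallmatrix}\right]\left[\begin{smallmatrix}I_r & 0\\ 0 & 0\end{smallmatrix}\right]=\left[\begin{smallmatrix}E_1 & E_2\\ E_3 & E_4\end{smallmatrix}\right]$, which immediately forces $E_2=0$ and $E_4=0$. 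Symmetrically, $AA^\dagger F=F$ yields $F_3=0$ and $F_4=0$. The intertwining relation $AE=FA$ reduces, after computing $AE=U\left[\begin{smallmatrix}\Sigma E_1 & \Sigma E_2\\ 0 & 0\end{smallmatrix}\right]V^*$ and $FA=U\left[\begin{smallmatrix}F_1\Sigma & 0\\ F_3\Sigma & 0\end{smallmatrix}\right]V^*$, to the single scalar-block equation $\Sigma E_1=F_1\Sigma$ (the off-diagonal blocks being already killed by the previous conditions). Finally, once $E_2=E_4=0$ and $F_3=F_4=0$ are known, the idempotency conditions $E^2=E$ and $F^2=F$ simplify to $E_1^2=E_1$, $E_3E_1=E_3$, $F_1^2=F_1$, and $F_1F_2=F_2$, giving part (b). The equivalence (b)$\Leftrightarrow$(c) then follows by translating $E_3E_1=E_3$ into $\Nu(E_1)\subseteq\Nu(E_3)$ and $F_1F_2=F_2$ into $\Ra(F_2)\subseteq\Ra(F_1)$, exactly as in the proof of (b)$\Leftrightarrow$(c) of Theorem \ref{characterization EF}.

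For the formula \eqref{canonical form}, I would simply evaluate $A^{(E,F)}=EA^\dagger F$ from \eqref{representation EF 1} using the reduced block forms $E=V\left[\begin{smallmatrix}E_1 & 0\\ E_3 & 0\end{smallmatrix}\right]V^*$ and $F=U\left[\begin{smallmatrix}F_1 & F_2\\ 0 & 0\end{smallmatrix}\right]U^*$, so that $EA^\dagger F=V\left[\begin{smallmatrix}E_1 & 0\\ E_3 & 0\end{smallmatrix}\right]\left[\begin{smallmatrix}\Sigma^{-1} & 0\\ 0 & 0\end{smallmatrix}\right]\left[\begin{smallmatrix}F_1 & F_2\\ 0 & 0\end{smallmatrix}\right]U^*$; carrying out the product gives the stated block matrix, where the intertwining relation $\Sigma E_1=F_1\Sigma$ can be used to confirm the $(1,1)$ and $(1,2)$ entries are consistent.

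The main obstacle I anticipate is purely bookkeeping rather than conceptual: I must verify that the off-diagonal blocks genuinely vanish in the right order, since some conditions (e.g. the idempotency relations) only take their simplified form \emph{after} the range/null-space conditions $E_2=E_4=F_3=F_4=0$ have been established. I would therefore deduce $E_2=E_4=0$ and $F_3=F_4=0$ first from $EA^\dagger A=E$ and $AA^\dagger F=F$, and only then expand $E^2=E$, $F^2=F$, and $AE=FA$; attempting to expand the idempotency conditions before eliminating the off-diagonal blocks would produce spurious cross-terms and obscure the clean form in (b). A minor additional subtlety is confirming that the intertwining equation $AE=FA$ contributes no new constraint beyond $\Sigma E_1=F_1\Sigma$ once the vanishing blocks are accounted for, which I would check by expanding both sides explicitly.
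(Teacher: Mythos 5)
Your proposal is correct and follows essentially the same route as the paper: both reduce the five existence conditions (Theorem \ref{characterization EF}(b), with $A^{(1)}=A^\dag$ justified by Remark \ref{rem 1}) block-by-block under the SVD, eliminating $E_2,E_4,F_3,F_4$ first and only then simplifying the idempotency and intertwining relations, and both obtain \eqref{canonical form} by evaluating $EA^\dag F$ in block form. One cosmetic slip: the simplifications that need $\Sigma E_1=F_1\Sigma$ together with $E_1^2=E_1$ and $E_3E_1=E_3$ occur in the first column of the product, i.e.\ the $(1,1)$ and $(2,1)$ blocks (reducing $E_1\Sigma^{-1}F_1$ and $E_3\Sigma^{-1}F_1$ to $E_1\Sigma^{-1}$ and $E_3\Sigma^{-1}$), not the $(1,1)$ and $(1,2)$ blocks as you wrote.
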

\begin{proof}
(a) $\Rightarrow $ (b). Since $A^{(E,F)}$ exists,
from Definition \ref{EFinv} and Remark \ref{rem 1} we have 
\begin{equation}\label{condition 2}
EQ_A=E, \quad P_AF=F, \quad  AE=FA, \quad E^2=E, \quad\text{and}\quad F^2=F.
\end{equation} 
The first equality in \eqref{condition 2} is equivalent to $E_2=0$ and $E_4=0$. While the second equality is equivalent to $F_3=0$ and $F_4=0$. Thus, 
\begin{equation}\label{E and F}
E=V\begin{bmatrix}
         E_1 & 0 \\
          E_3 & 0 \\
        \end{bmatrix}V^*\in \Cnn, \quad F=U\begin{bmatrix}
         F_1 & F_2 \\
          0 & 0 \\
        \end{bmatrix}U^*\in \Cmm.
\end{equation}
In consequence, from $E^2=E$  we obtain that $E_1^2=E_1$  and $E_3E_1=E_3$. Similarly,  from $F^2=F$ we have  $F_1^2=F_1$ and $F_1F_2=F_2$. Now, by using the expressions for $E$ and $F$ given in \eqref{E and F} and the condition $AE=FA$ we obtain $\Sigma E_1=F_1\Sigma$.
\\ (b) $\Rightarrow $ (a). It easy to check that the matrix given in \eqref{canonical form} satisfies the three conditions in \eqref{system 1}.
\\ (b) $\Leftrightarrow$ (c). Note that under assumption $E_1^2=E_1$, the equality $E_3E_1=E_3$ is equivalent to $\Nu(E_1)\subseteq \Nu(E_3)$. In fact, $E_3E_1=E_3$  holds if and only if $E_3(I_r-E_1)=0$ which in turn is equivalent to  $\Nu(E_1)\subseteq \Nu(E_3)$. Similarly, $F_1F_2=F_2$ is equivalent to $\Ra(F_2)\subseteq \Ra(F_1)$ provided  $F_1^2=F_1$. 
\\ Finally, in order to prove \eqref{canonical form} we use the representation of the $EF$-inverse of $A$ obtained in \eqref{representation EF 1} and the fact $F_1=\Sigma E_1 \Sigma^{-1}$, $E_3E_1=E_3$, and $E_1^2=E_1$. 
\end{proof}

\begin{example}\label{example 3} Consider the matrix
\[A= \left[\begin{array}{rrr}
a & 0 & 0 \\
0 & b & 0  \\
0 & 0 & 0   \\
0 & 0 & 0 
\end{array}\right], \quad a\in \mathbb{R} \setminus \{0\}, b\in \mathbb{R}.\]
in conjunction with the projectors 
\[E= \left[\begin{array}{rrr}
1 & 0 & 0 \\
c &  0 & 0  \\
0 &  0 &  0  
\end{array}\right] \quad \text{and}\quad F=\left[\begin{array}{rrrr}
1 & 0 & 0 & 0\\
\frac{bc}{a} &  0 & 0  & 0 \\
0 &  0 &  0  &0 \\
0 & 0 & 0 & 0
\end{array}\right].\]
Note that the matrix  $A$ is written as in Theorem \ref{svd} with $U=I_4$, $V=I_3$, and 
$\Sigma=\begin{bmatrix}
a & 0 \\
0 & b
\end{bmatrix}$. So, from \eqref{projectors E and F} one can see that 
$E_1=\begin{bmatrix}
1 & 0 \\
c & 0
\end{bmatrix}$, $E_2=0$, $E_3=0$, $E_4=0$,  $F_1=\begin{bmatrix}
1 & 0 \\
\dfrac{bc}{a} & 0
\end{bmatrix}$, and $F_2=F_3=F_4=0$. 
These matrices satisfy the conditions in (b) of Theorem \ref{canonical form rect}, which is a guarantee for the existence of the $EF$-inverse of $A$. Thus, from \eqref{canonical form} it is easy to check
\[
A^{(E,F)}=\left[\begin{array}{rrrr}
\frac{1}{a} & 0 & 0 & 0\\
\frac{c}{a} & 0 & 0 & 0 \\
0 & 0 & 0  & 0
\end{array}\right].
\]
\end{example}

\section{The $EF$-inverse as an extension of generalized bilateral and non-bilateral  inverses}

The core inverse for a square matrix was introduced in \cite{BaTr} by Baksalary and Trenkler as recently as 2010. Since then a considerable amount of research has been added to advance the theory of this inverse \cite{FeLeTh1, Ku, Ma, WaLi}.

As the core inverse exists only for matrices of index at most 1, in 2014 three kinds of generalizations of the core inverse were defined for complex square matrices of an arbitrary index.
We recall its definitions. Let $A \in \Cnn$ be with $\ind(A)=k$. Then
the unique matrix $X \in \Cnn$ satisfying
\[
XAX=X\quad \text{and} \quad
\Ra(X)=\Ra(X^*)=\Ra(A^k),
\] is called the core-EP inverse (or CEP) of $A$ and is
denoted by $A^{\odagger}$ \cite{MoPr}. In the same way, the dual core-EP (or $*$CEP) was defined as the unique matrix satisfying 
\[XAX=X\quad \text{and} \quad \Ra(X)=\Ra(X^*)=\Ra((A^k)^*),\]
and is denoted by $A_{\odagger}$.

The DMP inverse of $A$ is the unique matrix $X:=A^dAA^\dag$ that satisfies
\[
XAX=X,\qquad
XA=A^d A, \quad \text{and} \quad A^kX= A^k A^{\dagger},
\] is called
and is represented by $A^{d,\dagger}$ \cite{MaTh}. The associated dual inverse is given by the matrix $A^{\dag,d}=A^\dag A A^d$ and is called $*$DMP (or MPD) inverse of $A$.

The unique matrix given by $A^\diamond=(AP_A)^\dag$ is called the BT inverse of $A$ \cite{BaTr2}.

In 2018, the CMP inverse of a square matrix was presented by Mehdipour and Salemi \cite{MeSa}  as the unique matrix $X:=A^\dag AA^dAA^\dag$ (denoted by $A^{c,\dag}$) that satisfies 
\[
XAX=X,\quad AXA=AA^dA, \quad XA=A^\dag AA^dA, \quad AX=AA^dAA^\dag.
\] 

In the same year,  Wang and Chen \cite{WaCh} introduced the  WG inverse of a matrix \linebreak $A \in \Cnn$  as the unique matrix $X:=A^{\weak}$ satisfying
\[
AX^2=X \quad  \text{and} \quad AX=A^{\odagger}A.
\]
If $\ind(A)=1$, the WG inverse and the group inverse coincide. 

Similarly, by using the core-EP and dual core-EP inverses, Chen et al. \cite{ChMoXu} defined the MPCEP and $*$CEPMP inverses of $A$ as the matrices $A^{\dag,\odagger}=A^\dag A A^{\odagger}$ and $A_{\odagger,\dag}=A_{\odagger}AA^\dag$, respectively. 

To extend and unify most of above mentioned definitions of generalized inverses, the OMP, MPO, and MPOMP inverses were defined in \cite{MoSt} composing an arbitrary outer inverse and the Moore-Penrose inverse. More precisely, the OMP inverse of  $A\in \Cm$ is defined as the unique matrix $X:=A^{(2)}_{T,S}AA^\dag \in \Cn$ such that
\[
XAX=X,\quad XA=A^{(2)}_{T,S}A, \quad AX=AA^{(2)}_{T,S}AA^\dag,
\] 
and is denoted by  $A^{(2),\dag}_{T,S}$. Clearly, the core, DMP, and $*$CEPMP inverses are particular cases of the OMP inverse. 
Dually, the MPO (or $*$OMP) inverse of $A$ is the matrix  $A^{\dag,(2)}_{T,S}:=A^\dag A A^{(2)}_{T,S}$, which extends the dual core (or MPG), MPD and MPCEP inverses. 

On the other hand, the MPOMP inverse of $A$ given by the matrix $A^{\dag,(2),\dag}_{T,S}:=A^\dag A A^{(2)}_{T,S}AA^\dag$ generalizes the CMP and Moore-Penrose inverses. Notice that the \linebreak MPOMP inverse of $A$ can be rewritten in terms of the OMP (or MPO) inverse  as  $A^{\dag,(2),\dag}_{T,S}=A^\dag A A^{(2),\dag}=A^{\dag,(2)}AA^\dag$.

Motived by the way in which some of these inverses were defined, recently Kheirandish and Salemi  introduced the notion of generalized bilateral inverse as a unified approach to such inverses. 

The following definition is a slight modification of \cite[Definition 2.1]{KhSa} according to characterizations presented by the authors in Theorems 2.5 and 2.6. 
\begin{definition}  
Let $A\in \Cm$ and let $X_1,X_2 \in \Cn$ be such that $X_1 \in A\{2\}$ and $X_2 \in A\{1\}$. Then $X_1AX_2$ (or $X_2AX_1$) is called generalized bilateral inverse of $A$. 
\end{definition}

\begin{remark} By Theorems 2.5 and 2.6 in \cite{KhSa}, we know that the generalized bilateral inverse of a matrix $A\in \Cm$ always exists and is unique.
\end{remark}

Next, we show that the generalized bilateral inverse of a matrix can be obtained as a particular case of the $EF$-inverse.

\begin{theorem}\label{GBI 1} Let $A\in \Cm$ and let $X_1,X_2 \in \Cn$ be such that $X_1 \in A\{2\}$ and $X_2 \in A\{1\}$. Then  $A^{(E,F)}=X_1AX_2$, where $E=X_1A$ and $F=AX_1AX_2$.
\end{theorem}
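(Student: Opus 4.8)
The plan is to show directly that the matrix $X:=X_1AX_2$ satisfies the three defining equations of an $EF$-inverse for the prescribed $E=X_1A$ and $F=AX_1AX_2$, and then to invoke the uniqueness part of Theorem \ref{existence uniqueness}. Since by that theorem a common solution of \eqref{system 1}, whenever it exists, is unique, exhibiting one such solution identifies it with $A^{(E,F)}$. In particular, no separate verification of \eqref{condition 1} will be needed: those conditions are automatic once \eqref{system 1} is checked, by the forward implication of Theorem \ref{existence uniqueness}.

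First I would record the two hypotheses in the form in which they are used, namely $AX_2A=A$ (because $X_2\in A\{1\}$) and $X_1AX_1=X_1$ (because $X_1\in A\{2\}$). The equation $AX=F$ is then immediate, since $AX=AX_1AX_2=F$ by the very definition of $F$. For the second equation I would compute $XA=X_1AX_2A=X_1(AX_2A)=X_1A=E$, where the crucial step is grouping the last three factors and applying the inner-inverse identity $AX_2A=A$. For the outer equation I would reuse the identity just obtained, writing $XAX=(XA)X=X_1A\,(X_1AX_2)=(X_1AX_1)AX_2=X_1AX_2=X$, where now the outer-inverse identity $X_1AX_1=X_1$ is applied. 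This establishes the three equations in \eqref{system 1}.

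Finally, since $XAX=X$ forces $E=XA$ and $F=AX$ to be idempotent (indeed $(XA)^2=(XAX)A=XA$ and $(AX)^2=A(XAX)=AX$), the system \eqref{system 1} admits $X$ as a common solution, which by Theorem \ref{existence uniqueness} is unique; hence $A^{(E,F)}=X_1AX_2$, as claimed.

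I do not anticipate a genuine obstacle here, as the argument is a short chain of substitutions. The only point requiring care is the correct parenthesization: one must exploit $AX_2A=A$ when simplifying $XA$ and $X_1AX_1=X_1$ when simplifying $XAX$, and in particular one must not inadvertently assume $AX_1A=A$, which fails since $X_1$ is merely an outer inverse of $A$.
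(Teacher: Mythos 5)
Your proof is correct, and it is more self-contained than the paper's. The paper disposes of this theorem essentially by citation: it invokes \cite[Theorem 2.5]{KhSa}, which already asserts that $X:=X_1AX_2$ is the unique matrix satisfying $XAX=X$, $XA=X_1A$, and $AX=AX_1AX_2$, and then identifies $X$ with $A^{(E,F)}$. You instead verify the three equations of \eqref{system 1} directly from the defining identities $AX_2A=A$ and $X_1AX_1=X_1$ --- precisely the computations $XA=X_1(AX_2A)=X_1A=E$ and $XAX=(X_1AX_1)AX_2=X_1AX_2=X$, with $AX=F$ holding by definition of $F$ --- and then appeal to the uniqueness part of Theorem~\ref{existence uniqueness} to conclude $X=A^{(E,F)}$. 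Both arguments are sound; what yours buys is independence from the external reference, reducing the theorem to a two-line verification inside the paper's own framework (and your caution about the correct parenthesization, i.e.\ not inadvertently using $AX_1A=A$, which fails for a mere outer inverse, is exactly the right point of care). What the paper's route buys is brevity and an explicit bridge to the generalized-bilateral-inverse literature, making transparent that the theorem is a translation of the Kheirandish--Salemi characterization into the $EF$-inverse language. Your closing observation that idempotency of $E$ and $F$ is automatic from $XAX=X$ is correct but not needed once \eqref{system 1} itself is exhibited.
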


\begin{proof}
Let $X:=X_1AX_2$. From \cite[Theorems 2.5]{KhSa} we deduce
that $X$ is the unique matrix such that 
\begin{equation} \label{f1b} 
XAX = X,\quad XA=X_1A=E,\quad \text{and} \quad AX=AXAX=AX_1AX_2=F.
\end{equation}
Therefore,  $A^{(E,F)}$ exists and $X=A^{(E,F)}$. 
\end{proof}

Applying the same method as in Theorem \ref{GBI 1} and by using \cite[Theorems 2.6]{KhSa} instead of \cite[Theorems 2.5]{KhSa}, we obtain the following result.

\begin{theorem}\label{GBI 2} Let $A\in \Cm$ and let $X_1,X_2 \in \Cn$ be such that $X_1 \in A\{2\}$ and $X_2 \in A\{1\}$. Then  $A^{(E,F)}=X_2AX_1$, where $E=X_2AX_1A$ and $F=AX_1$.
\end{theorem}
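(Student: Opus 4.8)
The plan is to mirror the proof of Theorem~\ref{GBI 1} exactly, interchanging the roles of the two factors. Set $X:=X_2AX_1$ and recall that this is precisely the ``other'' generalized bilateral inverse treated in \cite[Theorem 2.6]{KhSa}. By that result $X$ is the unique matrix satisfying the three-equation system $XAX=X$, together with explicit formulas for $XA$ and $AX$. The task is to read off those two formulas in the form required by Definition~\ref{EFinv}, identify them with suitable idempotents $E$ and $F$, and then invoke Theorem~\ref{existence uniqueness} to conclude that $A^{(E,F)}$ exists and equals $X$.

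Concretely, I would first compute $XA=X_2AX_1A$ and observe that this is the natural left-hand factor $E$; the point is that $X_2\in A\{1\}$ makes $X_2A$ an idempotent, and one checks $X_2AX_1A$ is idempotent and agrees with the projector that \cite[Theorem 2.6]{KhSa} attaches to $XA$. Second, I would compute $AX=AX_2AX_1$ and note that since $X_2\in A\{1\}$ we have $AX_2A=A$, so $AX=AX_2AX_1=AX_1=F$; here the inner-inverse property collapses the expression cleanly to $AX_1$, which is the idempotent $F$ claimed in the statement. Thus the system~\eqref{system 1} holds with $E=X_2AX_1A$ and $F=AX_1$, and existence plus equality $X=A^{(E,F)}$ follow from Definition~\ref{EFinv} and the uniqueness in Theorem~\ref{existence uniqueness}.

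The only genuine content is the verification that the $E$ and $F$ named in the statement coincide with the projectors produced by \cite[Theorem 2.6]{KhSa}, i.e.\ that $XA=X_2AX_1A$ and $AX=AX_1$ hold as stated. The simplification $AX=AX_1$ is the crux: it relies on sandwiching the outer inverse $X_1$ by the identity $AX_2A=A$ coming from $X_2\in A\{1\}$. Once that reduction is in hand, idempotency of $E$ and $F$ and the defining identity $XAX=X$ are formal consequences of $X_1\in A\{2\}$ and $X_2\in A\{1\}$, exactly as in the symmetric computation carried out for Theorem~\ref{GBI 1}.

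I do not anticipate a serious obstacle, since the statement explicitly says the argument is ``the same method as in Theorem~\ref{GBI 1}'' with \cite[Theorem 2.6]{KhSa} replacing \cite[Theorem 2.5]{KhSa}. The one subtlety to watch is bookkeeping: in Theorem~\ref{GBI 1} the role of the inner inverse $X_2$ appeared on the \emph{right}, producing $F=AX_1AX_2$ and $E=X_1A$, whereas here $X_2$ appears on the \emph{left}, so the asymmetry flips and one gets the compact form $F=AX_1$ rather than a three-factor expression. Care is therefore needed to apply the inner-inverse identity $AX_2A=A$ on the correct side and not to carry over the formulas from Theorem~\ref{GBI 1} verbatim.
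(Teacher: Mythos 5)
Your proposal is correct and follows essentially the same route the paper intends: the paper gives no separate proof for this theorem, stating only that one repeats the proof of Theorem \ref{GBI 1} with \cite[Theorem 2.6]{KhSa} in place of \cite[Theorem 2.5]{KhSa}, which is exactly what you do. Your key simplification $AX = AX_2AX_1 = (AX_2A)X_1 = AX_1$ and the identification $XA = X_2AX_1A = E$, followed by the appeal to uniqueness (Theorem \ref{existence uniqueness}), match the intended argument, and your explicit verifications of idempotency and $XAX=X$ are correct formal consequences of $X_1\in A\{2\}$ and $X_2\in A\{1\}$.
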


We would like to point out that the BT, CEP, and WG inverses are not generalized bilateral inverses.  

In the following tables we illustrate that both generalized bilateral and non-\-bilateral inverses are particular cases of the $EF$-inverse.   For the sake of completeness,  we also add definitions and notations of some more  generalized inverses studied recently in the literature. 

\begin{table}[hb!]
\centering
\resizebox{12cm}{!} {
\begin{tabular}{l l l l l}
\hline
\textbf{Generalized bilateral inverses} &  \textbf{Name} & \textbf{$XA=E$} &   \textbf{$AX=F$}  & \textbf{Reference} \\  \hline \addlinespace
$A^{\core}=A^{\#}P_A$ & GMP & $A^{\#}A$ & $P_A$ &  \cite{BaTr}    \\ \addlinespace
$A_{\core}=Q_AA^{\#}$ & MPG & $Q_A$ & $AA^{\#}$ &  \cite{MoPr}   \\
\addlinespace
$A^{d,\dagger}=A^dP_A$ & DMP  & $A^dA$  & $AA^dP_A$ &  \cite{MaTh} \\
\addlinespace
$A^{\dagger, d}=Q_A A^d$ &  MPD  & $Q_AA^dA$   & $AA^d$  &  \cite{MaTh}  \\
\addlinespace
$A^{c,\dag}=A^{\dag,d} P_A$ &  CMP & $Q_AA^dA$ & $   AA^dP_A $  & \cite{MeSa} \\ 
\addlinespace
$A^{\dag, \odagger}=Q_A A^{\odagger}$ &  MPCEP & $Q_AA^{\odagger}A$ & $AA^{\odagger}$ &  \cite{ChMoXu}   \\
\addlinespace
$A_{\odagger,\dag}=A_{\odagger}P_A$ &  $*$CEPMP & $A_{\odagger}A$ & $AA_{\odagger}P_A$ &  \cite{ChMoXu}   \\
\addlinespace
$A^{\weak,\dag}=A^{\weak} P_A$ &  WGMP &  $A^{\weak}A$ &  $A A^{\weak}P_A$ &  \cite{FeLePrTh}   \\ \addlinespace
$A^{\dag,\weak}=Q_A A^{\weak}$ & MPWG &  $Q_A A^{\weak}A$  & $AA^{\weak}$ &  \cite{FeLePrTh} \\  \addlinespace
$A_{T,S}^{(2),\dag}=A_{T,S}^{(2)}P_A$ & OMP &  $A_{T,S}^{(2)}A$ & $AA_{T,S}^{(2)}P_A$ &  \cite{MoSt} \\ \addlinespace
$A_{T,S}^{\dag,(2)}=Q_AA_{T,S}^{(2)}$ & MPO &  $Q_AA_{T,S}^{(2)}A$ & $AA_{T,S}^{(2)}$ &  \cite{MoSt}  \\ \addlinespace
$A_{T,S}^{\dag,(2),\dag}=A_{T,S}^{\dag,(2)}P_A$ & MPOMP &  $Q_A A_{T,S}^{(2)}A$ & $AA_{T,S}^{(2)}P_A$ &  \cite{MoSt} \\ \addlinespace \hline 
\end{tabular}}
\caption{Generalized bilateral inverses}
\label{Generalized inverses 1}
\end{table}

\begin{table}[ht!]
\centering
\resizebox{12cm}{!} {
\begin{tabular}{l l l l l}
\hline
\textbf{Non generalized bilateral inverses} &  \textbf{Name} & \textbf{$XA=E$} &   \textbf{$AX=F$}  & \textbf{Reference} \\  \hline \addlinespace
$A^{\diamond }=(AP_{A})^{\dag }$  & BT  & $(AP_{A})^{\dag }A$  & $P_{A^2}$ & \cite{BaTr2, FeMa-chapter}\\
\addlinespace
$A^{\odagger}=A^{d}P_{A^{k}}$ & CEP & $A^{d}P_{A^{k}}A$ & $P_{A^{k}}$ &  \cite{MoPr, FeLeTh2}   \\
\addlinespace
 $A_{\odagger}=Q_{A^{k}}A^{d}$ & $*$CEP & $Q_{A^k}$ & $AQ_{A^{k}}A^{d}$ &  \cite{MoSt} \\
 \addlinespace
$A^{\weak}=(A^{\odagger})^2 A$ &  WG  &  $(A^{\odagger})^2 A^2$ & $A(A^{\odagger})^2 A$ &  \cite{WaCh}  \\ 
\addlinespace
$A^{\weak_2}=(A^{\odagger})^3 A^2$ &  GG  &  $(A^{\odagger})^3 A^3$ & $A(A^{\odagger})^3 A^2$ &  \cite{FeMa4}  \\ 
\addlinespace
 $A^{\weak_m}=(A^{\odagger})^{m+1} A^m$ &  $m$-WG  &  $(A^{\odagger})^{m+1} A^{m+1}$ & $A(A^{\odagger})^{m+1} A^m$ &  \cite{ZhChZh}  \\ 
 \addlinespace
 $A^{\core_m}= A^{\weak_m}P_{A^m}$ &  $m$-WC  &  $A^{\weak_m}P_{A^m}A$ & $AA^{\weak_m}P_{A^m}$ &  \cite{FeMa5}  \\ 
 \addlinespace
$A_{T,S}^{k,(2),\dagger}=A_{T,S}^{(2)}P_{A^k}$ & $k$-OMP &  $A_{T,S}^{(2)}P_{A^k}A$ & $AA_{T,S}^{(2)}P_{A^k}$ &  \cite{MoSt} \\ 
\addlinespace
$A_{T,S}^{k,\dagger,(2)}=Q_{A^k}A_{T,S}^{(2)}$ & $k$-MPO &  $Q_{A^k}A_{T,S}^{(2)}A$ & $AQ_{A^k}A_{T,S}^{(2)}$ &  \cite{MoSt} \\ \addlinespace \hline 
\end{tabular}}
\caption{Non generalized bilateral inverses}
\label{Generalized inverses 2}
\end{table}

\section*{Author Contribution} 

All authors contributed equally to the writing of this paper. All authors read and approved the final manuscript.

\section*{Data Availability} 

No data was used.

\section*{Funding} 

D.E. Ferreyra, F.E. Levis, and R.P. Moas are partially supported by Universidad Nacional de Río Cuarto (PPI 18/C559), Universidad Nacional de La Pampa, Facultad de Ingenier\'ia (Resol. Nro. 135/19) and CONICET (PIBAA 28720210100658CO). H.H. Zhu is supported by the National Natural Science Foundation of China (No. 11801124).

\section*{Declarations} 
 
\noindent {\bf Conflict of interest} The authors have no conflicts of interest.

\end{document}